\documentclass[amsmath,amssymb,twoside,12pt]{article}
\thispagestyle{empty}\pagestyle{myheadings}\markboth{\small D. Candeloro and 
A. R. Sambucini}{\small Comparison between some norm and order gauge integrals...}
\usepackage{amsfonts, amssymb}

\usepackage{layout,verbatim,amssymb,amsmath,setspace,cite}
\newcommand{\erre}{\mbox{$\mathbb{R}$}}
\newcommand{\enne}{\mbox{$\mathbb{N}$}}

\providecommand{\keywords}[1]{\textbf{{Key Words:}} #1}

\usepackage{amsmath}
\usepackage{amssymb}
\usepackage{amsthm}
\newtheorem{theorem}{Theorem}[section]
\newtheorem{lemma}[theorem]{Lemma}
\newtheorem{proposition}[theorem]{Proposition}
\newtheorem{corollary}[theorem]{Corollary}
\newtheorem{definition}[theorem]{Definition}
\newtheorem{remark}[theorem]{Remark}

\begin{document}
\begin{center}
 {\bf \Large Comparison between some norm and order gauge integrals in Banach lattices}
\\
\vspace{0.2cm} Domenico Candeloro \\ University of Perugia (Italy) \\
Department of Mathematics and Computer Sciences \\
via Vanvitelli, 1
I-06123 Perugia, Italy\\
{\tt candelor@dipmat.unipg.it}\\
\vspace{0.2cm} Anna Rita Sambucini \\ University of Perugia (Italy) \\
Department of Mathematics and Computer Sciences \\
via Vanvitelli, 1
I-06123 Perugia, Italy\\
{\tt anna.sambucini@unipg.it}\\

\end {center}

\abstract{Henstock-type integrals are considered, for 
functions defined in a Radon measure space and taking values in a Banach lattice $X$ considering the norm and the order structure of the space. 
 A number of results are obtained, highlighting the differences between them, specially in the case of $L$-space-valued functions.\\}

\noindent {\bf AMS Subject Classification}:
28B05, 28B20, 46B42, 46G10, 18B15\\

\noindent \keywords{ Banach lattices,  $L$ and $M$ spaces, gauges, Henstock integral,  Mc Shane integral, variational Henstock integral}\\

\section{Introduction}
In the last century many different notions of integral were introduced 
for real-valued functions, in order to generalize the Riemann one. 
The wide literature in this topic attests the interest for this field of research: see for example 
\cite{f1994a,f1994b,f1995,riecan,SS,
b,r2005,r2009a,bcs2011,cao,DPMILL,DPVM,dpp,dr,dallas,gould,gcmed,
bdlibro,brv,KS,bs2003,bs2004,bss07,cc}. 
Afterwards the notions of order-type integral were also 
introduced and studied, for functions taking their values in 
suitable ordered vector spaces: see  \cite{bms,csmed,mimmoroma,dallas, panoramica,aBS2004,aBS2011}.\\
This paper aims  to compare some norm- and order-type integrals,  with some interesting results  when $X$ is an $L$-space.\\
Throughout this paper $(T,d)$ is a compact metric Hausdorff topological space, $\Sigma$ its Borel
$\sigma$-algebra and $\mu : \Sigma \to \mathbb{R}_0^+$ a regular, pointwise non atomic measure, and
 $X$ a  Banach (lattice) space.
\\
In Sections \ref{HMS-univoco} and \ref{n-o-single}
norm and order-type integrals are compared, showing 
a first showy difference: order integrals in general do not respect almost everywhere equality, except for order-bounded functions.
 Another noteworthy difference is that order integrals enjoy the  Henstock Lemma: this fact has interesting consequences in $L$-spaces, 
as shown in Theorem \ref{Lspazio}.\\
In the subsection  \ref{01single} integrability in $[0,1]$ is discussed and it is proven that monotone mappings are McShane order-integrable.
In the same framework, two further notions of order-type integrals, inspired at the paper \cite{nara}, have been introduced: they are the corresponding notions of the Henstock and McShane order-type integrals, but requiring measurability of the involved gauges: in the
 normed case, these notions are discussed in \cite{nara} where their strict connection with the Birkhoff integrability has been shown and their equivalence is proven, for bounded functions;  here we prove that, also for the order-type integral, for bounded functions the two notions are equivalent.

Some of these results have been exposed at the SISY 2014 - IEEE 12th International Symposium on Intelligent Systems and Informatics (see \cite{cs2014}) and they  will be applied to the multivalued case in a subsequent paper \cite{bcs2015}.

\section{Henstock (and McShane)  integrals}\label{HMS-univoco}
Given a compact metric Hausdorff topological space $(T,d)$ and its Borel
$\sigma$-algebra $\Sigma$, let $\mu: \Sigma \rightarrow \mathbb{R}_0^+$ be a  $\sigma$-additive (bounded) regular measure, so that $(T,d,\Sigma,\mu)$ is a Radon measure space. Let $X$ be a Banach space.
The following concept of Henstock integrability was presented in \cite{f1994a} for bounded measures in the Banach space context. 
See also  \cite{bms} for the following definitions and investigations.
\\
A \textit{gauge} is any map \mbox{$\gamma: T \rightarrow \mathbb{R}^+$.}
A \textit{decomposition}  $\Pi$  of $T$ is a finite family $\Pi = \{ (E_i,t_i): i=1, \ldots,k \} $ of pairs such that
$t_i \in E_i, \, E_i \in \Sigma$ and $\mu(E_i\cap E_j) = 0$ for $i \neq j$.
 The points $t_i$, $i=1, \ldots, k$, are called \textit{tags}.
If moreover $\bigcup_{i=1}^k \, E_i = T$, $\Pi$ is called a \textit{partition}.
A \textit{Perron partition} (or also \textit{Henstock partition}) is a partition for which each tag $t_i$ belongs to the corresponding set $E_i$; while, if this condition is not necessarily
 fulfilled, the partition is said to be {\em free}.\\
Given a gauge $\gamma$, $\Pi$ is said to be  \textit{$\gamma$-fine} $(\Pi \prec \gamma)$ if
$dist(w,t_i) < \gamma (t_i)$ for every $ w \in E_i$ and $i = 1, \ldots, k$.
\begin{definition}\rm  \label{fnorm}
\rm A function $f:T\rightarrow X$ is \textit{{\rm H}-integrable}
if there exists $I \in X$ such that, for every $\varepsilon > 0$
 there is a gauge \mbox{$\gamma : T \rightarrow \mathbb{R}^+$} such that for every $\gamma$-fine Perron partition of $T$,
$\Pi=\{(E_i, t_i), i=1, \ldots, q \}$, one has:
\begin{eqnarray*}
\left\|\sigma(f, \Pi)- I \right\| \leq \varepsilon,
\end{eqnarray*}
where the symbol $\sigma( f, \Pi)$ means
 $\sum_{i=1}^q  f(t_i) \mu(E_i)$.
In this case, the following notation will be used: $I = {\rm (H)}\int_T f d\mu$. 
\end{definition}

It is not difficult to deduce, in case $f$ is H-integrable in the set $T$, that also the restrictions $f\ 1_E$ are, for every measurable set $E$, thanks to the Cousin Lemma (see \cite[ Proposition 1.7]{riecan}).
This is not true in the classical theory, where  $T=[0,1]$ and the partitions allowed are only those consisting of sub-intervals.

\begin{definition}
\rm A function $f:T\rightarrow X$ is \textit{{\rm MS}-integrable}
if there exists $I \in X$ such that, for every $\varepsilon > 0$
 there is a gauge $\gamma : T \rightarrow \mathbb{R}^+$ such that for every $\gamma$-fine partition of $T$,
$\Pi=\{(E_i, t_i), i=1, \ldots, q \}$, one has:
\begin{eqnarray*}
\left\|\sigma(f, \Pi)- I \right\| \leq \varepsilon,
\end{eqnarray*}
where the symbol $\sigma( f, \Pi)$ means
 $\sum_{i=1}^q  f(t_i) \mu(E_i)$.
In this case, the following notation will be used: $I = {\rm (H)}\int_T f d\mu$. 
\end{definition}

 As it is well-known, in the
 classical theory H-integrability is different from McShane integrability and also from Pettis integrability.  Nevertheless, McShane integrability of a mapping $f:[0,1]\to X$ (here $X$ is any Banach space) is equivalent to Henstock and Pettis simultaneous integrabilities to hold (see \cite[Theorem 8]{f1994a}).\\

In the papers \cite{r2009a,dr,dpp} one can find an interesting discussion of the cases in which Henstock, Pettis and McShane integrability are equivalent, and also counterexamples in some related problems.
\\

Since the measure $\mu$ is assumed to be pointwise non atomic, a soon as partitions consist of arbitrary {\em measurable} sets,  all concepts in the Henstock sense will turn out to be equivalent to the same concepts in the McShane sense
(i.e. without requiring that the {\em tags} are contained in the corresponding sets of the involved partitions); however the {\em Henstock} terminology and notations will remain unchanged. 
The next proposition clarifies this fact.
\begin{proposition}\label{mettipunti}
Assume, in the previous setting, that $\mu$ is pointwise non atomic, i.e. $\mu(\{t\})=0$ for all $t\in T$.
If $f:T\to X$ is  {\rm H}-integrable in $T$, then it is also Mc Shane-integrable.
\end{proposition}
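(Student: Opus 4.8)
The plan is to show that any McShane (free) $\gamma$-fine partition can be converted, without altering its Riemann sum, into a Henstock (Perron) $\gamma$-fine partition, and then to transfer the H-integrability estimate across this conversion. The whole point is that the two notions differ only in whether each tag sits inside its own set, and that under pointwise non-atomicity one may adjoin a tag to a set at no cost in $\mu$-measure.

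First I would fix $\varepsilon>0$ and, using H-integrability with value $I$, select a gauge $\gamma:T\to\mathbb{R}^+$ such that $\|\sigma(f,\Pi')-I\|\le\varepsilon$ for every $\gamma$-fine Perron partition $\Pi'$. Given then an arbitrary $\gamma$-fine free partition $\Pi=\{(E_i,t_i):i=1,\dots,q\}$, I would associate with it the modified family $\widetilde\Pi=\{(E_i\cup\{t_i\},t_i):i=1,\dots,q\}$. Since $T$ is metric, singletons are Borel, so each $E_i\cup\{t_i\}\in\Sigma$, and the construction is legitimate.

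Next I would check that $\widetilde\Pi$ is a bona fide $\gamma$-fine Perron partition. By construction each tag now belongs to its set; $\gamma$-fineness persists because the only point added to $E_i$ is $t_i$ itself, with $dist(t_i,t_i)=0<\gamma(t_i)$, while the remaining points of $E_i$ already satisfy the bound. The covering property $\bigcup_i(E_i\cup\{t_i\})=T$ is immediate, and almost-disjointness survives because $(E_i\cup\{t_i\})\cap(E_j\cup\{t_j\})$ exceeds $E_i\cap E_j$ by at most finitely many points, each of $\mu$-measure zero by the hypothesis $\mu(\{t\})=0$. Finally, the Riemann sums coincide: as $\mu(E_i\cup\{t_i\})=\mu(E_i)+\mu(\{t_i\}\setminus E_i)=\mu(E_i)$, one has $\sigma(f,\widetilde\Pi)=\sigma(f,\Pi)$, whence $\|\sigma(f,\Pi)-I\|=\|\sigma(f,\widetilde\Pi)-I\|\le\varepsilon$, which is precisely MS-integrability with the same value $I$.

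I do not expect a serious obstacle; the argument rests entirely on the single-point modification. The only step demanding genuine care is verifying that adjoining each tag keeps the family a partition with pairwise $\mu$-null intersections and preserves the Riemann sum, and this is exactly the point at which pointwise non-atomicity is indispensable — without $\mu(\{t\})=0$, the added singletons could carry mass and the equality of Riemann sums would fail.
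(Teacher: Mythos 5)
Your proof is correct and follows essentially the same route as the paper: both convert a $\gamma$-fine free partition into a $\gamma$-fine Perron partition by a $\mu$-null modification of the sets that forces each tag into its own set without changing the measures, hence without changing the Riemann sum. The only cosmetic difference is that the paper first deletes the whole tag set from every piece before re-inserting the appropriate tag (so as to keep the pieces genuinely disjoint), whereas you simply adjoin each tag and correctly observe that the definition of partition only requires the pairwise intersections to be $\mu$-null, which finitely many added singletons cannot spoil.
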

\begin{proof}
Of course, it will be sufficient to prove that, for every  gauge $\gamma$ and any $\gamma$-fine {\em free} partition  $\Pi^0$, there exists a Henstock-type $\gamma$-fine partition $\Pi^{\prime}$ satisfying $\sigma(f,\Pi^0) = \sigma(f,\Pi^{\prime})$.
\\
So, fix $\gamma$ and  $\Pi^0:=\{(B_i,t_i): i=1,...,k\}$ as above. Without loss of generality,  all the tags $t_i$ can be supposed to be distinct, otherwise it will be sufficient for each tag to take the union of all the sets $B_i$ paired with it.
Now, set $A:=\{t_i,i=1,...,k\}$ and define, for each $j$:
$B^{\prime}_j:=(B_j\setminus A)\cup\{t_j\}$.
Of course, each $B^{\prime}_j$ is measurable and is contained in $\gamma(t_j)$. Then the pairs $(B_j,t_j)$ form a $\gamma$-fine Henstock-type partition $\Pi^{\prime}$ and $\mu(B^{\prime}_j)=\mu(B_j)$ for all $j$, so
$\sigma(f,\Pi^0) = \sigma(f,\Pi^{\prime})$. This concludes the proof.
\end{proof}
So this Proposition shows that the use of free
$\gamma$-fine partitions does not modify the integral.\\

From now on  suppose that  $X$ is a Banach lattice
 with order-continuous norm, $X^+$ is its positive cone and $X^{++}$ is the subset of strictly positive elements of $X$.
The symbols $|~ |,\, \| ~\|$ refer to modulus and norm of $X$; for the relation between them and applications in Banach lattices see for example 
\cite{bvg,bvl,fvol3,bms,bbs,LW2,mn}.\\
An element $e$ of $X^+$ is an order unit  if for every $u \in X$ there is an $n \in \mathbb{N}$ such that $|u| \leq  ne$.\\ 
An $L$-space is a Banach lattice $L$ such that $\|u + v\| = \|u\| + \|v\|$ whenever $ u, v\in  L^+$.\\
An $M$-space is a Banach lattice $M$ in which the norm is an order-unit norm, namely there is an order unit $e$ and an equivalent Riesz norm $\| \cdot \|_e $ defined on $M$ by the formula
$\|u\|_e := \min\{ \alpha: |u| < \alpha e \}$
for every $u \in M$. In this case one has also
$\|u+v\|=\|u\|\vee \|v\|$. For further properties about
$L$- and $M$-spaces see also \cite[Section 354]{fvol3}.  \\
 
In this setting the notion of {\em order-type} integral can be given.
\begin{definition}\rm  \label{forder}
\rm A function $f:T\rightarrow X$ is \textit{order-integrable}
in the Henstock sense (\textit{{\rm (oH)}-integrability}  for short) if there exists $J \in X$ together with an $(o)$-sequence $(b_n)_n$ in $X$ and a corresponding sequence $(\gamma_n)_n$ of gauges, such that for every $n$ and every $\gamma_n$-fine Henstock partition of $T$,
$\Pi=\{(E_i, t_i), i=1, \ldots, q \}$, 
it is
$\left|\sigma(f, \Pi)- J \right| \leq b_n$
and the integral $J$ will be denoted with (oH)$\int f$.
\end{definition}

\begin{remark}\rm
Also in this case there is no difference in taking all free
$\gamma_n$-fine partitions.
It is easy to see that any (oH)-integrable  $f$ is also H-integrable and the two integrals coincide
thanks to the order continuity of the norm in $X$. The converse implication holds when $X$ is an $M$-space, but not in general: see for example Theorem \ref{Lspazio} and the following remarks.
Moreover  the (oH)-integrability of a function  $f$ implies also
its  Pettis integrability, thanks to  \cite[Theorem 8]{f1994a}.
\end{remark}

The following  Cauchy-type criterion holds:
\begin{theorem}\label{ordercauchy}{\rm (\cite[Theorem 5]{cs2014})}
Let $f:T\rightarrow X$ be any function. Then $f$ is {\rm (oH)}-integrable  if and only if there exist an $(o)$-sequence $(b_n)_n$ and a corresponding sequence $(\gamma_n)_n$ of gauges, such that for every $n$, as soon as $\Pi, \Pi'$ are two $\gamma_n$-fine Henstock  partitions, the following holds:
\begin{eqnarray}\label{cauchyprimo}
|\sigma(f,\Pi)-\sigma(f,\Pi')|\leq b_n.
\end{eqnarray}
\end{theorem}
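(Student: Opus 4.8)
The plan is to prove the two implications separately: the necessity is a routine triangle-inequality argument, while the sufficiency requires reconstructing the integral $J$ out of the Cauchy data and then verifying that it satisfies Definition \ref{forder}.

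For necessity, suppose $f$ is (oH)-integrable with integral $J$, $(o)$-sequence $(b_n)_n$ and gauges $(\gamma_n)_n$. Given two $\gamma_n$-fine Henstock partitions $\Pi,\Pi'$, the definition yields $|\sigma(f,\Pi)-J|\leq b_n$ and $|\sigma(f,\Pi')-J|\leq b_n$; the triangle inequality for the modulus then gives $|\sigma(f,\Pi)-\sigma(f,\Pi')|\leq 2b_n$. Since $(2b_n)_n$ is again an $(o)$-sequence, \eqref{cauchyprimo} holds with $(2b_n)_n$ in place of $(b_n)_n$.

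For sufficiency I would first reduce to \emph{decreasing} gauges: replacing $\gamma_n$ by $\tilde\gamma_n:=\gamma_1\wedge\cdots\wedge\gamma_n$ only shrinks the family of admissible partitions, so \eqref{cauchyprimo} persists, and now every $\tilde\gamma_m$-fine partition with $m\geq n$ is in particular $\tilde\gamma_n$-fine. By the Cousin Lemma, for each $n$ there is a $\tilde\gamma_n$-fine Henstock partition $\Pi_n$; set $S_n:=\sigma(f,\Pi_n)$. For $m\geq n$ both $\Pi_n$ and $\Pi_m$ are $\tilde\gamma_n$-fine, so \eqref{cauchyprimo} gives $|S_n-S_m|\leq b_n$. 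Because the norm is order-continuous and $(b_n)_n$ is an $(o)$-sequence, $\|b_n\|\to 0$, whence $(S_n)_n$ is norm-Cauchy and converges to some $J\in X$.

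It remains to check that this $J$ fulfils Definition \ref{forder} with the same $(o)$-sequence $(b_n)_n$ and the gauges $(\tilde\gamma_n)_n$. Fix $n$ and an arbitrary $\tilde\gamma_n$-fine Henstock partition $\Pi$. For every $m\geq n$ the partition $\Pi_m$ is $\tilde\gamma_n$-fine, so \eqref{cauchyprimo} gives $|\sigma(f,\Pi)-S_m|\leq b_n$, i.e. $S_m\in[\sigma(f,\Pi)-b_n,\,\sigma(f,\Pi)+b_n]$. The decisive step is the passage to the limit $m\to\infty$: since order intervals in a Banach lattice are norm-closed (the positive cone being closed) and $S_m\to J$ in norm, the limit $J$ lies in the same interval, i.e. $|\sigma(f,\Pi)-J|\leq b_n$. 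This is precisely (oH)-integrability and concludes the proof. I expect the main obstacle to be exactly this last passage from the order estimates valid for each finite $m$ to the analogous estimate for the norm-limit $J$: it is made possible by the order-continuity of the norm (which forces $\|b_n\|\to0$ and hence norm-convergence of $(S_n)_n$) together with the norm-closedness of order intervals.
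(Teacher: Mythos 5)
The paper does not actually prove Theorem \ref{ordercauchy}: it is imported without proof from the conference paper \cite[Theorem 5]{cs2014}, so there is no in-text argument to compare yours against. Your proof is correct and is the standard argument one would expect for this Cauchy criterion. The necessity direction via the triangle inequality for the modulus (yielding the $(o)$-sequence $(2b_n)_n$) is fine; in the sufficiency direction, the reduction to decreasing gauges $\tilde\gamma_n=\gamma_1\wedge\cdots\wedge\gamma_n$, the use of the Cousin Lemma (available here by \cite[Proposition 1.7]{riecan}, as the paper itself notes) to produce the partitions $\Pi_n$, and the deduction of norm-Cauchyness from $|S_n-S_m|\leq b_n$ via monotonicity of the lattice norm and order continuity (a standing hypothesis on $X$ in this paper) are all legitimate. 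The final step is also sound: order intervals are norm-closed because the positive cone of a Banach lattice is closed, so the estimate $|\sigma(f,\Pi)-S_m|\leq b_n$ survives the norm limit $S_m\to J$, giving $|\sigma(f,\Pi)-J|\leq b_n$ with the same $(o)$-sequence. I see no gap.
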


\section{Properties of the  Norm and the  Order integrals}\label{n-o-single}
Some interesting properties of the order integral can be described  comparing it with the norm one.
A first singular fact is that, in general, almost equal functions  
may have a different behavior with respect to the (oH)-integral, as  proven in \cite[Example 2.8]{bms}.
However, for order-bounded functions,  the following result holds.

\begin{proposition}\label{bdd} {\rm \cite[Proposition 4]{cs2014}}
Let $f,g:T\to X$ be two bounded maps, such that $f=g$ $\mu$-almost everywhere.  Then, $f$ is {\rm (oH)}-integrable if and only if $g$ is, and the integrals coincide.
\end{proposition}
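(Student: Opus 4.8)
The plan is to prove that (oH)-integrability of $f$ forces that of $g$ with the same value $J$, the converse following by symmetry. The only place where $f$ and $g$ can differ is the $\mu$-null set $N:=\{t\in T:f(t)\neq g(t)\}$, so for any tagged partition $\Pi=\{(E_i,t_i)\}$ the two Riemann sums differ only through the tags lying in $N$:
\begin{eqnarray*}
\sigma(f,\Pi)-\sigma(g,\Pi)=\sum_{t_i\in N}\big(f(t_i)-g(t_i)\big)\,\mu(E_i).
\end{eqnarray*}
Here boundedness enters decisively: fixing $M\in X^+$ with $|f|\le M$ and $|g|\le M$ pointwise, the lattice triangle inequality yields $|\sigma(f,\Pi)-\sigma(g,\Pi)|\le 2M\sum_{t_i\in N}\mu(E_i)$, so the whole problem reduces to making the total measure of the sets whose tags fall in $N$ as small as we please.

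To do this I would combine the outer regularity of $\mu$ with a surgery on the gauges. Let $(b_n)_n$, $(\gamma_n)_n$ witness the (oH)-integrability of $f$ with integral $J$, and fix reals $\varepsilon_n\downarrow0$. As $\mu(N)=0$, choose open sets $U_n\supseteq N$ with $\mu(U_n)<\varepsilon_n$, and define $\gamma_n'(t):=\gamma_n(t)$ for $t\notin N$ while, for $t\in N$, shrinking $\gamma_n'(t)\le\gamma_n(t)$ so that $B(t,\gamma_n'(t))\subseteq U_n$ (possible since $t\in U_n$ and $U_n$ is open). Any $\gamma_n'$-fine Henstock partition $\Pi$ is then also $\gamma_n$-fine, so $|\sigma(f,\Pi)-J|\le b_n$; moreover every $E_i$ with $t_i\in N$ satisfies $E_i\subseteq B(t_i,\gamma_n'(t_i))\subseteq U_n$, and since the $E_i$ overlap only on $\mu$-null sets, $\sum_{t_i\in N}\mu(E_i)\le\mu(U_n)<\varepsilon_n$. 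Combining with the previous paragraph gives
\begin{eqnarray*}
|\sigma(g,\Pi)-J|\le|\sigma(g,\Pi)-\sigma(f,\Pi)|+|\sigma(f,\Pi)-J|\le 2M\varepsilon_n+b_n=:b_n'.
\end{eqnarray*}

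The remaining, and I expect principal, point is to verify that $(b_n')_n$ is again an $(o)$-sequence, so that $g$ is (oH)-integrable with the same integral. The sequence $(b_n')_n$ is decreasing, being the sum of two decreasing sequences; and since the norm of $X$ is order-continuous we have $\|b_n\|\to0$, whence $\|b_n'\|\le\|b_n\|+2\|M\|\varepsilon_n\to0$. Consequently any lower bound $w$ of $(b_n')_n$ satisfies $0\le w^+\le b_n'$ and thus $\|w^+\|\le\|b_n'\|\to0$, forcing $w^+=0$ and $w\le0$; hence $\inf_n b_n'=0$ and $(b_n')_n$ is an $(o)$-sequence. This proves $g$ is (oH)-integrable with $(\mathrm{oH})\!\int g=J=(\mathrm{oH})\!\int f$, and exchanging $f$ and $g$ gives the equivalence. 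It is worth stressing that the uniform order bound $M$ is exactly what makes the estimate $|f(t_i)-g(t_i)|\le 2M$ available; without it the order difference of the Riemann sums need not be controlled by the (small) measure $\sum_{t_i\in N}\mu(E_i)$, which is precisely why a.e. equality fails for the (oH)-integral of unbounded integrands.
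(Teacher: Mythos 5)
Your proof is correct and is essentially the expected argument: the paper does not reproduce a proof of Proposition \ref{bdd} (it only cites \cite[Proposition 4]{cs2014}), and the natural proof proceeds exactly as you do, covering the null set by open sets of small measure via regularity, shrinking the gauge there so that the corresponding sets of any fine partition fall inside those open sets, and using the uniform order bound $2M$ to control the difference of the Riemann sums. The only cosmetic point is that $\{t:f(t)\neq g(t)\}$ need not itself be measurable, so one should work with a measurable null superset; your verification that $(b_n')_n$ is again an $(o)$-sequence via order continuity of the norm is exactly right and is the step that genuinely uses the standing hypotheses on $X$.
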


Another interesting difference concerns the Henstock Lemma, which is valid in the present form for the order integral but not, in general, for the norm one.
The technique of the proof  is
similar to that of  \cite[Theorem 1.4]{mimmoroma}.

\begin{proposition}\label{henlemma}{\rm \cite[Proposition 6]{cs2014}}
Let $f:T\to X$ be any {\rm (oH)}-integrable  function. Then, there exist an $(o)$-sequence $(b_n)_n$ and a corresponding sequence $(\gamma_n)_n$ of gauges, such that, for all $n$ and all $\gamma_n$-fine Henstock  partitions $\Pi$ one has
\[ \sum_{E\in \Pi} \sup_{\Pi'_E,\Pi''_E}\{|\hskip-2mm\sum_{F''\in \Pi''_E}f(\tau_{F''})\mu(F'') - \hskip-2mm\sum_{F'\in \Pi'_E}f(\tau_{F'})\mu(F')|\} \leq b_n
\]
and $\Pi'_E, \Pi''_E$ 
belong to the family of all $\gamma_n$-fine Henstock partitions of $E$.
\end{proposition}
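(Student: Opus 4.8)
The plan is to derive the statement from the order Cauchy criterion of Theorem~\ref{ordercauchy}, using the lattice modulus to upgrade a bound on the \emph{modulus of a sum} into a bound on the \emph{sum of moduli}; this upgrade is exactly what is unavailable for the norm integral in a general Banach space, and it is the source of the asymmetry noted before the statement. So I would first apply Theorem~\ref{ordercauchy} to fix an $(o)$-sequence $(b_n)_n$ and gauges $(\gamma_n)_n$ with $|\sigma(f,\Pi)-\sigma(f,\Pi')|\leq b_n$ for every pair $\Pi,\Pi'$ of $\gamma_n$-fine Henstock partitions of $T$, and I claim these same sequences witness the Henstock Lemma.

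Fix $n$ and a $\gamma_n$-fine Henstock partition $\Pi=\{(E_j,t_j):j=1,\dots,m\}$ of $T$. For each $j$ pick arbitrary $\gamma_n$-fine Henstock partitions $\Pi'_{E_j},\Pi''_{E_j}$ of $E_j$ and set $d_j:=\sigma(f,\Pi''_{E_j})-\sigma(f,\Pi'_{E_j})$. The key device is a sign trick: for $\eta=(\eta_1,\dots,\eta_m)\in\{-1,1\}^m$ glue
\[
\Pi^{+}:=\!\!\bigcup_{\eta_j=1}\!\!\Pi''_{E_j}\ \cup\!\!\bigcup_{\eta_j=-1}\!\!\Pi'_{E_j},\qquad
\Pi^{-}:=\!\!\bigcup_{\eta_j=1}\!\!\Pi'_{E_j}\ \cup\!\!\bigcup_{\eta_j=-1}\!\!\Pi''_{E_j}.
\]
Since $\bigcup_j E_j=T$ and $\mu(E_i\cap E_j)=0$ for $i\neq j$, the sets occurring in $\Pi^{\pm}$ cover $T$ and pairwise meet in $\mu$-null sets, while each is $\gamma_n$-fine with its tag inside; hence $\Pi^{+}$ and $\Pi^{-}$ are genuine $\gamma_n$-fine Henstock partitions of $T$. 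By construction $\sigma(f,\Pi^{+})-\sigma(f,\Pi^{-})=\sum_j\eta_j d_j$, so the Cauchy estimate gives $\sum_j\eta_j d_j\leq|\sigma(f,\Pi^{+})-\sigma(f,\Pi^{-})|\leq b_n$ for \emph{every} sign vector $\eta$.

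Taking the supremum over $\eta$ and using the Riesz-space identity $\sup_{\eta\in\{-1,1\}^m}\sum_j\eta_j d_j=\sum_j\bigl(d_j\vee(-d_j)\bigr)=\sum_j|d_j|$ (the sum over independent signs splits into the one-variable suprema $d_j\vee(-d_j)=|d_j|$), I obtain $\sum_j|d_j|\leq b_n$ for \emph{any} choice of the sub-partitions. It then remains to pull the supremum over sub-partitions inside the sum. Since $X$ has order-continuous norm it is Dedekind complete, and each family $\mathcal{D}_j:=\{|d_j|:\Pi'_{E_j},\Pi''_{E_j}\text{ are }\gamma_n\text{-fine of }E_j\}$ is bounded above by $b_n$ (take $\Pi'_{E_k}=\Pi''_{E_k}$, hence $d_k=0$, for $k\neq j$), so $\sup\mathcal{D}_j$ exists; a one-index-at-a-time argument then yields $\sum_j\sup\mathcal{D}_j=\sup\{\sum_j x_j:x_j\in\mathcal{D}_j\}\leq b_n$, which is precisely the asserted inequality. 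The main obstacle is the sign trick of the second paragraph: it is the lattice modulus, combined with the gluing of local sub-partitions into global ones, that turns the single Cauchy bound into control of the whole sum of local oscillations, whereas exchanging $\sum$ and $\sup$ is a soft consequence of Dedekind completeness.
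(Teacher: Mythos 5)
Your proof is correct and follows essentially the same route as the paper's: both start from the Cauchy criterion of Theorem~\ref{ordercauchy} and glue arbitrary $\gamma_n$-fine Henstock sub-partitions of the sets $E\in\Pi$ into two global $\gamma_n$-fine partitions of $T$ to which that criterion applies. The only difference is in the final bookkeeping: where you extract $\sum_j|d_j|\leq b_n$ via the sign-vector supremum and then move the supremum over sub-partitions inside the sum by Dedekind completeness, the paper alternately takes $\sup$ over $\Pi'_{E_j}$ and $\inf$ over $\Pi''_{E_j}$ one index at a time, accumulating the local oscillations $\omega_n(f,E_j)$ directly.
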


\begin{proof}  
By theorem \ref{ordercauchy}, an $(o)$-sequence $(b_n)_n$ exists, together with a corresponding sequence of gauges $(\gamma_n)_n$, so that
\begin{eqnarray}\label{primocauchy}
\sum_{F'\in \Pi'}f(\tau_{F'})\mu(F')-\sum_{F''\in \Pi''}f(\tau_{F''})\mu(F'')\leq b_n
\end{eqnarray}
holds, for all $\gamma_n$-fine partitions $\Pi', \ \Pi''$. Now, 
consider any $\gamma_n$-fine partition $\Pi$, and, for every $E$ in  $\Pi$, 
take two arbitrary subpartitions $\Pi'_E$ and $\Pi''_E$.\\
By varying $E$, this gives rise to 
 two $\gamma_n$-fine partitions of $T$ 
for which (\ref{primocauchy}) holds true. 
Denote by $E_1$ the first element of $\Pi$, and, 
in the summation at left-hand side, 
take the supremum with respect just to the sets of the type $F'$  that are contained in $E_1$:
this yields
$$\sup_{\Pi'_{E_1}}\sigma(f,\Pi'_{E_1})+\sum_{\substack{F'\in \Pi',\\ F'\not\subset E_1}}f(\tau_{F'})\mu(F')-\sum_{F''\in \Pi''}\sigma(f,\Pi)\leq b_n.$$
Now, by varying only the $F''s$ that are contained in $E_1$, one obtains:
\begin{eqnarray*}
 \sup_{\Pi'_{E_1}}\sigma(f,\Pi'_{E_1})-\inf_{\Pi''_{E_1}}\sigma(f,\Pi''_{E_1})+
 \sum_{\substack{F'\in \Pi', \\ F'\not\subset E_1}}f(\tau_{F'})\mu(F')-\sum_{F''\not\subset E_1}\sigma(f,\Pi)\leq b_n,
\end{eqnarray*}
namely
\begin{eqnarray*}
 \omega_n(f,E_1)+ \hskip-2mm \sum_{\substack{F'\in \Pi',\\ F'\not\subset E_1}}f(\tau_{F'})\mu(F') - \hskip-2mm
 \sum_{\substack{F''\in \Pi'',\\F''\not\subset E_1}}f(\tau_{F''})\mu(F'')\leq b_n.
\end{eqnarray*}
where $$\omega_n(E):=\sup_{\Pi'_E,\Pi''_E}\{|\hskip+0mm\sum_{F''\in \Pi''_E}f(\tau_{F''})\mu(F'') - \hskip+1mm\sum_{F'\in \Pi'_E}f(\tau_{F'})\mu(F')|\}.$$

Now, starting from the last inequality, make the same operation in  the second subset of $\Pi$ (say $E_2$), by keeping fixed all the $F'$ and $F''$ that are not contained in it: so
\begin{eqnarray*}
\hskip-1cm&& \omega_n(f,E_1)+\omega_n(f,E_2)+\hskip-2mm \sum_{\substack{F'\in \Pi',\\ F'\not\subset E_1\cup E_2}}f(\tau_{F'})\mu(F')
- \sum_{\substack{F''\in \Pi'',\\F''\not\subset E_1\cup E_2}}f(\tau_{F''})\mu(F'')\leq b_n.
\end{eqnarray*}
Continuing this procedure the assertion follows.
\end{proof}

A consequence of Proposition \ref{henlemma} is that  
(oH)-integrability is hereditary on every  measurable subset $A$ of $T$.
In fact taking the same $(o)$-sequence $(b_n)_n$ 
together  with the corresponding sequence $(\gamma_n)_n$ 
for each $n$ any  $\gamma_n$-fine partition of $A$ can be extended to a $\gamma_n$-fine partition of $T$ 
thanks to the Cousin Lemma and so, for any two $\gamma_n$-fine partitions $\Pi, \ \Pi'$ of $A$, it follows
$$|\sigma(f,\Pi)-\sigma(f,\Pi')|\leq \omega_n(f,A)\leq b_n.$$
Then, the Theorem \ref{ordercauchy} yields the conclusion. The additivity of the integral can be obtained as well.
Moreover by Proposition \ref{henlemma} it follows:

\begin{corollary}\label{henstoc2}{\rm (\cite[Theorem 9]{cs2014})}
Let $f:T\to X$ be any {\rm (oH)}-integrable function. Then there exist an $(o)$-sequence $(b_n)_n$ and a corresponding sequence $(\gamma_n)_n$ of gauges, such that:
\begin{description}
\item[\ref{henstoc2}.1)]
 for every $n$ and every $\gamma_n$-fine partition $\Pi$  one has
$$\sum_{E\in \Pi}|f(\tau_E)\mu(E)-{\rm (oH)}\int_Ef d\mu|\leq b_n.$$
\item[\ref{henstoc2}.2)]
for every $n$ and every $\gamma_n$-fine partition $\Pi$ it holds
$$\sum_{E\in \Pi}|f(\tau_E)\mu(E)-f(\tau_E')\mu(E)|\leq b_n,$$
when all the tags  satisfy the condition $E\subset \gamma_n(\tau_E')$ and $E\subset \gamma_n(\tau_E)$ for all $E$.
\end{description}
\end{corollary}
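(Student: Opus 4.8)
The plan is to derive both assertions directly from the order Henstock Lemma (Proposition \ref{henlemma}), reusing the very same $(o)$-sequence $(b_n)_n$ and the same sequence of gauges $(\gamma_n)_n$ that it provides. Recall that for each measurable $E$ the quantity $\omega_n(f,E)$ denotes the supremum of $|\sigma(f,\Pi''_E)-\sigma(f,\Pi'_E)|$ taken over all pairs of $\gamma_n$-fine Henstock partitions $\Pi'_E,\Pi''_E$ of $E$, and that Proposition \ref{henlemma} asserts $\sum_{E\in\Pi}\omega_n(f,E)\le b_n$ for every $\gamma_n$-fine partition $\Pi$ of $T$. The whole corollary will follow once each summand on the left-hand sides of \ref{henstoc2}.1) and \ref{henstoc2}.2) is bounded, termwise, by $\omega_n(f,E)$.

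I would treat \ref{henstoc2}.2) first, since it is immediate. Fix $n$ and a $\gamma_n$-fine partition $\Pi$ whose tags $\tau_E,\tau'_E$ both satisfy $E\subset\gamma_n(\tau_E)$ and $E\subset\gamma_n(\tau'_E)$. Then, for each $E\in\Pi$, the one-element families $\{(E,\tau_E)\}$ and $\{(E,\tau'_E)\}$ are each $\gamma_n$-fine Henstock partitions of $E$; choosing them as $\Pi''_E$ and $\Pi'_E$ in the definition of $\omega_n(f,E)$ gives at once $|f(\tau_E)\mu(E)-f(\tau'_E)\mu(E)|\le\omega_n(f,E)$. Summing over $E\in\Pi$ and invoking Proposition \ref{henlemma} yields \ref{henstoc2}.2).

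For \ref{henstoc2}.1) the key step is the termwise estimate $|f(\tau_E)\mu(E)-{\rm (oH)}\int_E f\,d\mu|\le\omega_n(f,E)$. Here one first uses the heredity of (oH)-integrability on measurable subsets (established after Proposition \ref{henlemma} via the Cousin Lemma) to guarantee that ${\rm (oH)}\int_E f\,d\mu$ exists and is approximated, with the same $(o)$-sequence and gauges, by the sums $\sigma(f,\cdot)$ over $E$. Assuming, as one may after replacing the $\gamma_m$ by their successive minima, that $(\gamma_m)_m$ is decreasing (and recalling that an $(o)$-sequence satisfies $b_m\downarrow 0$ in order), I pick for each $m\ge n$ a $\gamma_m$-fine partition $\Pi_E^{(m)}$ of $E$, which exists by the Cousin Lemma and is a fortiori $\gamma_n$-fine. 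Then $\{(E,\tau_E)\}$ and $\Pi_E^{(m)}$ are both $\gamma_n$-fine partitions of $E$, so $|f(\tau_E)\mu(E)-\sigma(f,\Pi_E^{(m)})|\le\omega_n(f,E)$, while the defining property of the integral over $E$ gives $|\sigma(f,\Pi_E^{(m)})-{\rm (oH)}\int_E f\,d\mu|\le b_m$. The triangle inequality for the modulus then yields $|f(\tau_E)\mu(E)-{\rm (oH)}\int_E f\,d\mu|\le\omega_n(f,E)+b_m$ for every $m\ge n$; letting $m\to\infty$ and using $b_m\downarrow 0$ in order removes the $b_m$ term. Summing over $E\in\Pi$ and applying Proposition \ref{henlemma} delivers \ref{henstoc2}.1).

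The \emph{main obstacle} is precisely this limiting passage in \ref{henstoc2}.1). A direct appeal to the integrability criterion on each $E$ would only give a bound of $b_n$ on every individual set, hence, after summation, a bound growing with the number of sets of $\Pi$, which is useless; the whole point of the Henstock Lemma is to replace that crude per-set bound by $\omega_n(f,E)$, whose sum over the partition is controlled by $b_n$ alone. Making the order-limit rigorous requires ensuring that $|f(\tau_E)\mu(E)-{\rm (oH)}\int_E f\,d\mu|\le\omega_n(f,E)+b_m$ holds simultaneously for all $m\ge n$, so that the order-infimum in $m$ may be taken on the right-hand side; the monotone rearrangement of the gauges is exactly what secures this simultaneity.
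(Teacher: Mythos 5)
Your proof is correct and takes essentially the route the paper intends: the corollary is stated there as an immediate consequence of Proposition \ref{henlemma}, with no further details supplied. Your termwise bounds of each summand by $\omega_n(f,E)$ --- including the order-limit argument that removes the $b_m$ term in \ref{henstoc2}.1) --- supply exactly the missing details, so there is nothing to object to.
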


\begin{remark}\label{paracool}\rm
Observe that in  Corollary \ref{henstoc2}  all partitions may also be
free, since, as  already noticed, the restriction $\tau_E\in E$ does not affect the results.
\end{remark}

A consequence of this theorem is  the following
\begin{theorem}\label{modulointegrabil}{\rm (\cite[Theorem 11]{cs2014})}
If $f:T\to X$ is {\rm (oH)}-integrable, then also $|f|$ is.
\end{theorem}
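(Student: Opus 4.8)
The plan is to verify the Cauchy-type criterion of Theorem~\ref{ordercauchy} for the function $|f|$. Concretely, I would take the very same $(o)$-sequence $(b_n)_n$ and sequence of gauges $(\gamma_n)_n$ furnished by Corollary~\ref{henstoc2} applied to $f$, and show that for every $n$ and every pair of $\gamma_n$-fine partitions $\Pi,\Pi'$ one has $|\sigma(|f|,\Pi)-\sigma(|f|,\Pi')|\le b_n$. Once this is established, Theorem~\ref{ordercauchy} immediately yields the $(\mathrm{oH})$-integrability of $|f|$.

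The central device is a common refinement. Writing $\Pi=\{(E,\tau_E)\}$ and $\Pi'=\{(E',\tau_{E'})\}$, I would consider the measurable sets $E\cap E'$; since $\mu$ is $\sigma$-additive and the sets of each partition overlap only on $\mu$-null sets, one has $\mu(E)=\sum_{E'}\mu(E\cap E')$ and $\mu(E')=\sum_{E}\mu(E\cap E')$. Using that $\mu(E\cap E')\ge 0$ and that $|\lambda x|=\lambda|x|$ for $\lambda\ge 0$ in a Banach lattice, both Riemann sums expand over the refined family, giving
\begin{eqnarray*}
\sigma(|f|,\Pi)-\sigma(|f|,\Pi')=\sum_{E,E'}\bigl(|f(\tau_E)|-|f(\tau_{E'})|\bigr)\,\mu(E\cap E').
\end{eqnarray*}
Applying the Riesz-space inequalities $|\sum_i x_i|\le\sum_i|x_i|$ and $\bigl||a|-|b|\bigr|\le|a-b|$ termwise then gives
\begin{eqnarray*}
\bigl|\sigma(|f|,\Pi)-\sigma(|f|,\Pi')\bigr|\le\sum_{E,E'}|f(\tau_E)-f(\tau_{E'})|\,\mu(E\cap E').
\end{eqnarray*}

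To close, I would interpret the right-hand side through Corollary~\ref{henstoc2}.2. The refined family $\{E\cap E'\}$ is itself a $\gamma_n$-fine partition of $T$, and for each piece $E\cap E'$ both $\tau_E$ and $\tau_{E'}$ are admissible tags in the gauge sense: indeed $E\cap E'\subset E$ forces $\mathrm{dist}(w,\tau_E)<\gamma_n(\tau_E)$ for all $w\in E\cap E'$, and symmetrically for $\tau_{E'}$, while by Remark~\ref{paracool} the tags need not lie inside the sets. Hence Corollary~\ref{henstoc2}.2 applies to this partition with the two tag assignments and bounds the last sum by $b_n$, as required.

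The routine points are the measure bookkeeping $\mu(E)=\sum_{E'}\mu(E\cap E')$ (valid up to $\mu$-null intersections, which contribute nothing to the sums) and the two lattice inequalities. The step deserving the most care, and the real heart of the argument, is checking that the common refinement, equipped with either family of tags, is a legitimate $\gamma_n$-fine configuration so that Corollary~\ref{henstoc2}.2 is genuinely applicable; this is precisely what lets the modulus of a Riemann-sum difference for $|f|$ be controlled by the tag-variation estimate already encoded in the Henstock Lemma for $f$.
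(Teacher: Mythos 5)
Your proof is correct and follows the route the paper itself indicates: Theorem \ref{modulointegrabil} is presented as a consequence of Corollary \ref{henstoc2}, and the verification of the Cauchy criterion of Theorem \ref{ordercauchy} for $|f|$ via the common refinement $\{E\cap E'\}$, the lattice inequality $\bigl||a|-|b|\bigr|\le|a-b|$, and condition \ref{henstoc2}.2 (with free tags as allowed by Remark \ref{paracool}) is exactly the intended argument, as the proof of Theorem \ref{Lspazio} confirms when it says to proceed ``as in Theorem \ref{modulointegrabil}'' to get the Cauchy criterion for $\|f\|$. No gaps.
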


Now one can prove that the  modulus of
 the indefinite  oH-integral of $f$ is precisely the indefinite oH-integral of $|f|$; a result of this type was given in  
\cite[Theorem 1]{DW}
for the Bochner integral  of  functions taking values in a Banach lattice $X$.
\begin{definition}\label{modulus}\rm 
Let $f:T\to X$ be any {\rm (oH)}-integrable mapping, and set
$\mu_f(A)={\rm (oH)}\int_A f d\mu$
for all Borel sets $A\in \mathcal{B}$.
 Then $\mu_f$ is said to be the {\em indefinite integral} of $f$. The {\em modulus} of $\mu_f$, denoted by $|\mu_f|$, is defined for each $A\in \mathcal{B}$ as follows:
$|\mu_f|(A)=\sup\{\sum_{B\in \pi}|\mu_f(B)|: \pi\in \Pi(A)\}$
where $\Pi(A)$ is the family of all finite partitions of $A$.
\end{definition}
Then
\begin{theorem}\label{parallelo} {\rm (\cite[Theorem 13]{cs2014})}
If $f:T\to X$ is {\rm (oH)}-integrable then one has
$|\mu_f|= \mu_{|f|}.$
\end{theorem}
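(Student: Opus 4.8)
The plan is to establish the two inequalities $|\mu_f|\le \mu_{|f|}$ and $\mu_{|f|}\le|\mu_f|$ separately, as $X^+$-valued set functions on $\mathcal B$, the second being the substantial one. Throughout I would fix a Borel set $A$ and use that $|f|$ is itself {\rm (oH)}-integrable by Theorem \ref{modulointegrabil}. By replacing $\gamma_n$ with the pointwise minimum of the gauges associated with $f$ and with $|f|$, and by adding the two $(o)$-sequences, I may assume that a single sequence $(\gamma_n)_n$ of gauges and a single $(o)$-sequence $(b_n)_n$ serve simultaneously for $f$, for $|f|$, and for the conclusion of Corollary \ref{henstoc2}, all restricted to $A$. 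This restriction is legitimate because {\rm (oH)}-integrability is hereditary on $A$, as noted after Proposition \ref{henlemma}.

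For the first inequality I would begin with the pointwise estimate $|\mu_f(B)|\le\mu_{|f|}(B)$ for every Borel $B\subseteq A$. Choosing a $\gamma_n$-fine Henstock partition $\Pi$ of $B$, the lattice triangle inequality gives $|\sigma(f,\Pi)|\le\sigma(|f|,\Pi)$; combining $|\sigma(f,\Pi)-\mu_f(B)|\le b_n$ with $|\sigma(|f|,\Pi)-\mu_{|f|}(B)|\le b_n$ then yields $|\mu_f(B)|\le\mu_{|f|}(B)+2b_n$, and letting $n\to\infty$ along the $(o)$-sequence removes the error term. Summing over any finite Borel partition $\pi$ of $A$ and invoking additivity of the indefinite integral, $\sum_{B\in\pi}|\mu_f(B)|\le\sum_{B\in\pi}\mu_{|f|}(B)=\mu_{|f|}(A)$. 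Thus the net $\pi\mapsto\sum_{B\in\pi}|\mu_f(B)|$, increasing under refinement, is bounded above by $\mu_{|f|}(A)$; since $X$ has order-continuous norm it is Dedekind complete, so the supremum $|\mu_f|(A)$ exists and satisfies $|\mu_f|(A)\le\mu_{|f|}(A)$.

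The reverse inequality is where the Henstock Lemma does the real work. Fixing $n$ and a $\gamma_n$-fine partition $\Pi=\{(E,\tau_E)\}$ of $A$ (available by the Cousin Lemma), I would apply Corollary \ref{henstoc2}.1 in the form $\sum_{E\in\Pi}|f(\tau_E)\mu(E)-\mu_f(E)|\le b_n$. Using the termwise reverse triangle inequality $\bigl|\,|f(\tau_E)\mu(E)|-|\mu_f(E)|\,\bigr|\le|f(\tau_E)\mu(E)-\mu_f(E)|$ and summing, I obtain $\sigma(|f|,\Pi)=\sum_E|f(\tau_E)|\mu(E)\le\sum_E|\mu_f(E)|+b_n$. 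Since $\{E:(E,\tau_E)\in\Pi\}$ is a finite Borel partition of $A$, the middle sum is at most $|\mu_f|(A)$, so $\sigma(|f|,\Pi)\le|\mu_f|(A)+b_n$. Finally, {\rm (oH)}-integrability of $|f|$ gives $\mu_{|f|}(A)\le\sigma(|f|,\Pi)+b_n$, whence $\mu_{|f|}(A)\le|\mu_f|(A)+2b_n$; letting $n\to\infty$ completes the argument.

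I expect the main obstacle to be bookkeeping rather than conceptual: ensuring that one common pair $\bigl((\gamma_n)_n,(b_n)_n\bigr)$ simultaneously controls $f$, $|f|$, and the Henstock-Lemma sums on the fixed set $A$, and that passing to the supremum over partitions interacts correctly with the order limits. This is precisely where Dedekind completeness (from order continuity of the norm) and the monotonicity of the variation net under refinement are needed. Everything else reduces to transplanting the scalar-type inequalities $|\sigma(f,\Pi)|\le\sigma(|f|,\Pi)$ and $\bigl|\,|a|-|b|\,\bigr|\le|a-b|$ into the Banach lattice.
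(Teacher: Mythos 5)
Your proposal is correct and follows essentially the same route as the paper: both directions rest on the (oH)-integrability of $|f|$ (Theorem \ref{modulointegrabil}), the elementary estimate $|\mu_f|\leq\mu_{|f|}$, and, for the substantial inequality, the Henstock Lemma (Corollary \ref{henstoc2}.1) combined with the lattice reverse triangle inequality $\bigl|\,|a|-|b|\,\bigr|\leq|a-b|$ applied termwise over a $\gamma_n$-fine partition. The only cosmetic difference is that you prove the identity directly on an arbitrary Borel set $A$, whereas the paper establishes it on $T$ and then transfers it to all of $\mathcal{B}$ by additivity.
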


\begin{proof} 
The {\rm (oH)}-integrability of $|f|$ follows from 
 Theorem \ref{modulointegrabil},
together with
$|\mu_f|\leq \mu_{|f|}$ and so 
$|\mu_f|$ is bounded.
 Now  for the reverse inequality it will be sufficient to prove that $\mu_{|f|}(T)=|\mu_f|(T)$
thanks to the additivity of  $|\mu_f|$ and $\mu_{|f|}$.
 Let $(b_n)_n$ and $(\gamma_n)_n$  be an $(o)$-sequence and its corresponding sequence of gauges
 such that, for every $n$ and every $\gamma_n$-fine partition $\pi\equiv (E_i,t_i)_i$ it holds
\begin{eqnarray}
 &&\mbox{~}\hskip-1.5cm \nonumber \vee  \left\{
\sum_i\left|f(t_i)\mu(E_i)-{\rm (oH)}\int_{E_i}f d\mu\right|,
\sum_{i}\left| |f(t_i)|\mu(E_i)-{\rm (oH)}\int_{E_i}|f |d\mu\right|
\right\}
\leq b_n,\\
&& \label{terz}
\sum_{i}\left|\ |f(t_i)|\mu(E_i)-\left|{\rm (oH)}\int_{E_i}f d\mu \right|\ \right|\leq b_n.
\end{eqnarray}
 So, 
\begin{eqnarray*}
\mu_{|f|}(T)-|\mu_f|(T) &\leq& \sum_i\left(\mu_{|f|}(E_i)-\left|{\rm (oH)}\int_{E_i}fd\mu \right|\right) \leq 
\\&\leq&
 \sum_i\left (\mu_{|f|}(E_i)-|f(t_i)|\mu(E_i)\right)+\\
&+& 
\sum_i\left (|f(t_i)|\mu(E_i)-\left|{\rm (oH)}\int_{E_i}fd\mu\right|\right)\leq 2 b_n
\end{eqnarray*}
thanks to  (\ref{terz}). Since $ b_n  \downarrow 0$, then 
$\mu_{|f|}(T)=|\mu_f|(T)$, from which 
the assertion follows.
\end{proof}

Another  appealing consequence  is obtained in $L$-spaces.
In the sequel, when the Banach lattice is assumed to be an $L$-space, it will be denoted by $L$ rather than $X$.
The following definition, related with norm-integrability, is introduced.
\begin{definition} \rm \cite[Definition 3]{DPVM}
 $f:T \to X$ is {\em variationally {\rm H} integrable} (in short {\rm vH}-integrable) if 
for every $\varepsilon>0$ there exists a gauge $\gamma$ such that, for every $\gamma$-fine partition $\Pi\equiv(E,t_E)_E$ the following inequality holds:
\begin{eqnarray}\label{variazio}
\sum_{E\in \Pi}\|f(t_E)\mu(E)-{\rm (H)}\int_E f d\mu\|\leq \varepsilon.
\end{eqnarray}
\end{definition}
For results on  variational integrability see also \cite{M1,DPVM,bdpm}. 
\begin{theorem}\label{Lspazio}{\rm (\cite[Theorem 15]{cs2014})}
If $f:T\to L$ is {\rm (oH)}-integrable then  $f$ is Bochner integrable.
\end{theorem}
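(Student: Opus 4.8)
Now I write my proof proposal for Theorem \ref{Lspazio}.

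\medskip

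The plan is to reduce the statement to the known characterization of Bochner integrability via \emph{variational} $H$-integrability, with all the real work done by the Henstock Lemma (Corollary \ref{henstoc2}) combined with the special additivity of the norm in an $L$-space. First I would invoke the first part of Corollary \ref{henstoc2}: there exist an $(o)$-sequence $(b_n)_n$ and corresponding gauges $(\gamma_n)_n$ such that, for every $n$ and every $\gamma_n$-fine partition $\Pi\equiv(E,\tau_E)_E$, the order estimate
\[
\sum_{E\in\Pi}\left|f(\tau_E)\mu(E)-{\rm (oH)}\int_E f\, d\mu\right|\leq b_n
\]
holds in $L$. The decisive step is to convert this \emph{order} inequality into a \emph{numerical} variational one.

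Since the lattice norm is monotone, applying $\|\cdot\|$ to both sides yields $\big\|\sum_{E}|f(\tau_E)\mu(E)-{\rm (oH)}\int_E f\,d\mu|\big\|\leq\|b_n\|$. Here the $L$-space hypothesis enters: because the norm is additive on the positive cone $L^+$, and every summand is a positive element, the norm of the sum equals the sum of the norms; together with $\||u|\|=\|u\|$ and the fact that on each $E$ the order integral coincides with the norm ($H$-) integral, this gives
\[
\sum_{E\in\Pi}\left\|f(\tau_E)\mu(E)-{\rm (H)}\int_E f\, d\mu\right\|\leq\|b_n\|.
\]
Since $(b_n)_n$ is an $(o)$-sequence and the norm of $L$ is order-continuous, $\|b_n\|\to0$. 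Hence, given $\varepsilon>0$, choosing $n$ with $\|b_n\|\leq\varepsilon$ and the gauge $\gamma:=\gamma_n$ shows exactly that $f$ satisfies the defining condition (\ref{variazio}), i.e. $f$ is vH-integrable. Free partitions are admissible throughout by Remark \ref{paracool}, consistently with the nonatomic setting of Proposition \ref{mettipunti}.

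Finally I would conclude by the classical fact, available in the cited works \cite{DPVM,bdpm,M1}, that a vector-valued function is variationally $H$-integrable if and only if it is Bochner integrable; this equivalence already supplies strong measurability, so no separate measurability argument is needed beyond the Pettis integrability already recorded for $f$.

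The main obstacle, and the only place where \emph{being an $L$-space} rather than a general Banach lattice is essential, is the second step. In an arbitrary Banach lattice one only has the triangle inequality $\big\|\sum_E|a_E|\big\|\leq\sum_E\|a_E\|$, so the order bound $b_n$ would control $\big\|\sum_E|a_E|\big\|$ but leave the full variational sum $\sum_E\|a_E\|$ unbounded; it is precisely the additivity of the norm on $L^+$ that upgrades the Henstock-Lemma order estimate into the variational estimate required for Bochner integrability, and this is why the result is special to $L$-spaces.
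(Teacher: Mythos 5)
Your proposal is correct and follows essentially the same route as the paper: reduce to variational (McShane-type) integrability via the Di Piazza--Musia\l{} characterization, and use the additivity of the $L$-norm on the positive cone to turn the order estimate of the Henstock Lemma (Corollary \ref{henstoc2}) into the numerical variational estimate (\ref{variazio}). In fact your version is slightly more streamlined, since you apply the $L$-norm identity directly to \ref{henstoc2}.1), whereas the paper detours through \ref{henstoc2}.2), the scalar bound (\ref{normaepsilo}) and the Henstock integrability of $\|f\|$ before reaching the same conclusion.
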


\begin{proof} 
It will be enough to prove that  $f$ is vH-integrable thanks to \cite[Theorem 2]{DPMILL}.
By  Corollary \ref{henstoc2}, there exist an $(o)$-sequence $(b_n)_n$ and a corresponding sequence $(\gamma_n)_n$ of gauges, such that, for 
every $n$ and every $\gamma_n$-fine partition $\Pi$  one has
$$\sum_{E\in \Pi}|f(\tau_E)\mu(E)-f(\tau_E')\mu(E)|\leq b_n.$$ 
By order continuity of the norm it is
$\lim_n\|b_n\|=0$. So, fix $\varepsilon>0$ and let $N \in \mathbb{N}$  be such that $\|b_N\|\leq \varepsilon$.
Then, if $\Pi$ is any $\gamma_N$-fine partition, one has
$$\left\| \, \sum_{E\in \Pi}|f(\tau_E)\mu(E)-f(\tau_E')\mu(E)| \, \right\| \leq \|b_N \| \leq \varepsilon,$$
in accordance with  condition {\bf \ref{henstoc2}.2}). 
Since $\|\cdot \|$ is an $L$-norm  it follows that
\begin{eqnarray}\label{normaepsilo}
\sum_{E\in \Pi} \, \big|\|f(\tau_E)\|-\|f(\tau_E')\|\big|\mu(E) \, \leq \|b_N\|\leq \varepsilon,\end{eqnarray}
when $\Pi$ is $\gamma_N$-fine, both for the tags $\tau_E$ and for the tags $\tau_E'$. 
Now, proceeding as in  Theorem \ref{modulointegrabil}, it is not difficult to prove that $\|f\|$ satisfies the Cauchy criterion  for the Henstock integrability, and therefore it is integrable.
From (\ref{normaepsilo}) and  (\ref{henstoc2}.1) the condition (\ref{variazio}) follows.
 \end{proof}

This result is apparently in contrast with the common situation in general Banach spaces (when only norm integrals are involved); moreover, in some normed
lattices,  Bochner (norm) integrability does not imply {\rm oH}-integrability, as shown in the example
 (\cite[Example 2.8]{bms}).
\\

Theorem \ref{Lspazio} can be used to show that  the  (oH)-integrability is stronger than  (H)-integrability: 
it is enough to consider for  example the function $f$ defined in  \cite[Theorem 3]{SS} when $X$ is any $L$-space of infinite dimension. The function $f$ is McShane integrable (i.e. H-integrable), but not Bochner integrable. Since $f$ takes values in an $L$-space, it cannot be oH-integrable, in view of the previous theorem.
\\

Another consequence of the previous results is that an $M$-space $X$ admits an equivalent $L$-norm only if it is of finite dimension. In fact in such an $M$-space $X$ 
the  (H)-integrability and the (oH)-integrability are the same: so, if $X$ has an equivalent $L$-norm then the previous argument
 immediately shows that $X$ must be of finite dimension.

\subsection{Integrability in $[0,1]$}\label{01single}
In this section 
the space $T$ is 
 $[0,1]$  endowed with the usual Lebesgue measure $\lambda$. 
Rather than using partitions made up with arbitrary measurable subsets, here they will be taken more traditionally as {\em free} partitions consisting of subintervals.
Indeed, in \cite{f1995} it is proven that there is equivalence between the two types:
 though the proof there is related only to norm integrals, the technique is the same. 
The symbol (oM)-integral will be used  instead of (oH)-integral. 
Analogously to  \cite[ Lemma 5.35]{KS}, it holds: 
\begin{lemma}\label{come5.35}{\rm (\cite[lemma 18]{cs2014})}
Let $f:[0,1]\to X$ be given  and suppose that there exists an $(o)$-sequence $(b_n)_n$ such that, for every $n$ 
two {\rm (oM)}-integrable functions $g_1$ and $g_2$ can be found, with the same regulating $(o)$-sequence 
$(\beta_n)_n$, such that $g_1\leq f\leq g_2$ and 
$${\rm (oM)}\int g_2 d\lambda \leq {\rm (oM)}\int g_1 d\lambda+b_n.$$
Then $f$ is {\rm (oM)}-integrable.
\end{lemma}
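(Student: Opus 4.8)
The plan is to verify the Cauchy-type criterion of Theorem \ref{ordercauchy} for $f$, building a single $(o)$-sequence and a corresponding family of gauges out of the data attached to the approximating functions. The fact that all the $g_1$'s and $g_2$'s share the \emph{same} regulating $(o)$-sequence $(\beta_n)_n$ is precisely what makes this diagonal construction possible: it lets me align the tolerance of the approximation with the index $n$ of the gauge, which is essential because the functions $g_1,g_2$ themselves change with $n$.

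First I would fix $n$ and apply the hypothesis to obtain (oM)-integrable functions $g_1 \le f \le g_2$ (both depending on $n$), regulated by $(\beta_n)_n$, with ${\rm (oM)}\int g_2\,d\lambda \le {\rm (oM)}\int g_1\,d\lambda + b_n$. Reading off the definition of (oM)-integrability at level $n$ for each of $g_1$ and $g_2$, and taking the pointwise minimum of the two gauges produced, I obtain a single gauge $\gamma_n$ such that every $\gamma_n$-fine partition $\Pi$ satisfies $\left|\sigma(g_i,\Pi) - {\rm (oM)}\int g_i\,d\lambda\right| \le \beta_n$ for $i = 1,2$.

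The order sandwich then does the real work. Since $\mu(E) \ge 0$ and $g_1 \le f \le g_2$ pointwise, summing over $E \in \Pi$ gives $\sigma(g_1,\Pi) \le \sigma(f,\Pi) \le \sigma(g_2,\Pi)$ for every $\gamma_n$-fine $\Pi$. Hence, for any two $\gamma_n$-fine partitions $\Pi,\Pi'$,
\[
\sigma(f,\Pi) - \sigma(f,\Pi') \le \sigma(g_2,\Pi) - \sigma(g_1,\Pi') \le {\rm (oM)}\int g_2\,d\lambda - {\rm (oM)}\int g_1\,d\lambda + 2\beta_n \le b_n + 2\beta_n ,
\]
where I used the two integrability estimates and the hypothesis; the symmetric estimate then yields $|\sigma(f,\Pi) - \sigma(f,\Pi')| \le b_n + 2\beta_n =: c_n$.

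The main point to check, and the only genuinely non-routine step, is that $(c_n)_n$ is itself an $(o)$-sequence. It is clearly decreasing; and if $0 \le w \le b_n + 2\beta_n$ for all $n$, then fixing $m$ and letting $n \ge m$ gives $w - 2\beta_m \le b_n$, so $w \le 2\beta_m$ because $b_n \downarrow 0$, and finally $w = 0$ because $\beta_m \downarrow 0$. With the diagonal gauges $(\gamma_n)_n$ and the $(o)$-sequence $(c_n)_n$ now in hand, Theorem \ref{ordercauchy} delivers the (oM)-integrability of $f$.
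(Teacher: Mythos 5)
Your argument is correct and is exactly the intended one: the paper omits the proof (deferring to \cite[Lemma 18]{cs2014}, itself modelled on \cite[Lemma 5.35]{KS}), and that proof is the same squeeze via the Cauchy criterion of Theorem \ref{ordercauchy}, taking at level $n$ the minimum of the two gauges attached to the $n$-th pair $g_1,g_2$ and using the common regulating sequence $(\beta_n)_n$ to form the $(o)$-sequence $(b_n+2\beta_n)_n$. Your verification that $(b_n+2\beta_n)_n$ is again an $(o)$-sequence is the right (and only nontrivial) point to spell out.
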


The (oM)-integrability  of  increasing functions   can be obtained  as in \cite[example 5.36]{KS}.
\begin{theorem}\label{mcmonotone}{\rm (\cite[Theorem 19]{cs2014})}
Let $f:[0,1]\to X$ be increasing. Then $f$ is {\rm (oM)}-integrable. 
\end{theorem}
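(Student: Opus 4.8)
The plan is to deduce the statement from the sandwich criterion of Lemma~\ref{come5.35}, approximating the increasing map $f$ from below and from above by step functions built on the uniform grid of $[0,1]$, exactly as one argues for real-valued monotone functions. The only genuinely vector-valued issues will be (i) that the approximating step functions are themselves ${\rm (oM)}$-integrable with a \emph{common} regulating $(o)$-sequence, and (ii) that the gap between their integrals forms an $(o)$-sequence in $X$; both reduce the problem neatly to the scalar picture.

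First I would fix, for each $n\in\mathbb{N}$, the decomposition of $[0,1]$ into the $n$ subintervals $I_k=I_k^{(n)}$ (with $I_1=[0,\tfrac1n]$ and $I_k=(\tfrac{k-1}{n},\tfrac{k}{n}]$ for $k\geq 2$), and set
\[
g_1^{(n)}=\sum_{k=1}^n f\Big(\tfrac{k-1}{n}\Big)1_{I_k},\qquad g_2^{(n)}=\sum_{k=1}^n f\Big(\tfrac{k}{n}\Big)1_{I_k}.
\]
Since $f$ is increasing, on each $I_k$ one has $f(\tfrac{k-1}{n})\leq f(x)\leq f(\tfrac{k}{n})$, whence $g_1^{(n)}\leq f\leq g_2^{(n)}$ pointwise. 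Because $\mu(I_k)=\tfrac1n$ for every $k$, the integrals telescope:
\[
{\rm (oM)}\!\int g_2^{(n)}\,d\lambda-{\rm (oM)}\!\int g_1^{(n)}\,d\lambda=\frac1n\sum_{k=1}^n\Big(f(\tfrac{k}{n})-f(\tfrac{k-1}{n})\Big)=\frac1n\big(f(1)-f(0)\big)=:b_n.
\]
Here $f(1)-f(0)\in X^+$ is a fixed element and $\tfrac1n\downarrow 0$, so $(b_n)_n$ is an $(o)$-sequence in $X$, the Banach lattice being Archimedean. Thus the numerical gap required by the lemma is already in hand.

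Next I would verify the ${\rm (oM)}$-integrability of each $g_i^{(n)}$. Every such function is a finite combination $\sum_k c_k 1_{I_k}$ with $c_k\in X$, and each summand $c_k 1_{I_k}$ is ${\rm (oM)}$-integrable: for any free $\gamma$-fine partition $\Pi$ one has $|\sigma(c_k1_{I_k},\Pi)-c_k\mu(I_k)|=|c_k|\,|\sigma(1_{I_k},\Pi)-\mu(I_k)|$, and the \emph{scalar} discrepancy $|\sigma(1_{I_k},\Pi)-\mu(I_k)|$ can be forced below any prescribed $\delta_m\downarrow 0$ by a gauge that shrinks near the two endpoints of $I_k$ (the only nonzero contributions coming from the finitely many subintervals of $\Pi$ straddling an endpoint, by regularity of $\lambda$). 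Summing over $k$ shows each step function is ${\rm (oM)}$-integrable with integral $\sum_k c_k\mu(I_k)$; since all coefficients of $g_1^{(n)}$ and $g_2^{(n)}$ lie in the fixed finite set $\{f(0),f(\tfrac1n),\dots,f(1)\}$, a single gauge scheme with tolerance $\delta_m$ furnishes both functions with the \emph{common} regulating $(o)$-sequence $\beta_m=\delta_m\,v$, where $v\in X^+$ bounds the relevant jumps. With $g_1^{(n)}\leq f\leq g_2^{(n)}$, the shared sequence $(\beta_m)_m$, and the gap ${\rm (oM)}\int g_2^{(n)}\leq {\rm (oM)}\int g_1^{(n)}+b_n$, all hypotheses of Lemma~\ref{come5.35} hold, and the ${\rm (oM)}$-integrability of $f$ follows.

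The main obstacle is precisely this middle step: transporting the elementary integrability of interval-indicators into the order setting and producing a single regulating $(o)$-sequence valid for both approximants simultaneously. In the norm theory this is routine, but here the estimates must be kept order-theoretic throughout, using that the coefficients are drawn from a fixed finite set and that, for an increasing $f$, the jumps are positive and dominated by $f(1)-f(0)$, so that the Archimedean property of $X$ guarantees $\beta_m\downarrow 0$ rather than merely $\|\beta_m\|\to 0$.
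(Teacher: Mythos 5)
Your proposal is correct and follows essentially the same route as the paper, which obtains the result exactly as in \cite[Example 5.36]{KS}: sandwich the increasing $f$ between the lower and upper step functions on the uniform grid, observe that the gap telescopes to $\frac{1}{n}\bigl(f(1)-f(0)\bigr)$, and invoke Lemma~\ref{come5.35}. One small point worth making explicit: to get a regulating $(o)$-sequence $(\beta_m)_m$ that does not deteriorate with $n$, bound the step-function discrepancy via the telescoped jumps (dominated by $|f(0)|+f(1)-f(0)$) rather than via $\sum_k|c_k|$ over the $n+1$ grid values, as you indicate in your closing paragraph.
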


\medskip
In a similar way, one can prove  the  (oM)-integrability more generally for (order bounded) mappings that are Riemann-integrable in the order sense. 
\\

Above and also in the previous Theorem, one can easily see that the gauges involved in the proof are constant mappings. 
More generally, one can consider notions of McShane, or Henstock integrability, in which the gauges involved are required to be {\em measurable} positive mappings. These notions have been studied in \cite{nara} and related bibliography, in terms of the {\em norm} integral.
The corresponding notions for the order-type integral are now considered. 


\begin{definition}\label{naram}\rm
 The mapping $f:[0,1]\to X$ is said to be {\em order-Birkhoff integrable} if there exist an element $J\in X$, an $o$-sequence $(p_n)_n$ in $X$ and a corresponding sequence of {\em measurable}  gauges $\gamma_n$ on $[0,1]$ such that, for every integer $n$, and every $\gamma_n$-fine free partition $P\equiv (t_i,E_i)_{i=1}^{k}$ of $[0,1]$ it holds
$|\sigma(f,P)-J|\leq p_n,$
where $\sigma(f,P)=\sum_{i=1}^kf(t_i)|E_i|.$
\end{definition}

Observe  that the {\em partitions} $P$ consist of non-overlapping intervals, and the tags $t_i$ do not belong necessarily to the corresponding subintervals $E_i$. 
When $f$ is order-Birkhoff integrable (shortly, (oB)-integrable), the element $J$ will be called the oB-integral of $f$ and denoted by (oB)$\int_0^1 f dx$.
The terminology used here, i.e. {\em Birkhoff integrability}, is due to the fact that, in the case of norm-type integrals, this notion is equivalent precisely to the Birkhoff (norm) integral: see \cite[Remark 1]{nara}.
\\
For further reference, the notation (oBH)-integrability (i.e. {\em (order-Henstock-Birkhoff)}-integrability)  is used if, in the above definition, only Perron-type partitions are allowed, i.e. partitions for which the tags belong to the corresponding sub-intervals.
Standard techniques allow to prove usual properties of the (oB)-integral, according to the next Proposition.

\begin{proposition}
Let $f$ and $g$ be $oB$-integrable mappings on $[0,1]$, and fix arbitrarily $\alpha, \beta$ in $\erre$. Then $\alpha f+\beta g$ is  {\rm oB}-integrable and
$${\rm (oB)}\int_0^1 (\alpha f+\beta g) dx =\alpha\ {\rm (oB)}\int_0^1  f\ dx+\beta {\rm (oB)}\int_0^1  g\ dx.$$
Moreover, $f1_{[a,b]}$ is {\rm oB}-integrable, for every $a,b \in [0,1],\ a<b$, and
$${\rm (oB)}\int_0^1 f\ dx =  {\rm (oB)}\int_0^1 f 1_{[0,a]}\ dx +{\rm (oB)}\int_0^1 f 1_{[a,1]}\ dx $$
Clearly, the notation ${\rm (oB)}\int_a^b f \ dx$ will be used, to indicate the integral of $f1_{[a,b]}$.
\end{proposition}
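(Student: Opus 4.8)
The plan is to treat the two assertions separately, handling linearity by a common-refinement of gauges and the interval additivity by a gauge modification that decouples the two sides of the splitting point.

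For the linearity statement I would start from oB-data for $f$ and $g$: an element $J_f$ with an $(o)$-sequence $(p_n)_n$ and measurable gauges $(\gamma_n^f)_n$, and an element $J_g$ with an $(o)$-sequence $(q_n)_n$ and measurable gauges $(\gamma_n^g)_n$. Setting $\gamma_n:=\gamma_n^f\wedge\gamma_n^g$ produces again a sequence of \emph{measurable} positive gauges (the pointwise minimum of two measurable functions is measurable), and every $\gamma_n$-fine free partition is simultaneously $\gamma_n^f$-fine and $\gamma_n^g$-fine. Since $\sigma(\alpha f+\beta g,P)=\alpha\,\sigma(f,P)+\beta\,\sigma(g,P)$ for every partition $P$, the triangle inequality for the modulus in the Riesz space $X$ gives, for every $\gamma_n$-fine $P$,
\begin{eqnarray*}
|\sigma(\alpha f+\beta g,P)-(\alpha J_f+\beta J_g)| &\le& |\alpha|\,|\sigma(f,P)-J_f|+|\beta|\,|\sigma(g,P)-J_g| \\ &\le& |\alpha|\,p_n+|\beta|\,q_n.
\end{eqnarray*}
As a sum of non-negative scalar multiples of $(o)$-sequences is again an $(o)$-sequence, this exhibits $\alpha f+\beta g$ as oB-integrable with integral $\alpha J_f+\beta J_g$, which is the claimed formula. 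This part is routine.

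For the interval additivity I would first reduce the general claim to the two-piece splitting at a single interior point $a$: once $f1_{[0,a]}$ and $f1_{[a,1]}$ are known to be oB-integrable with integrals adding up to (oB)$\int_0^1 f\,dx$, the integrability of $f1_{[a,b]}$ follows by applying the splitting twice (first split $f$ at $a$, then split the oB-integrable piece $f1_{[a,1]}$ at $b$), together with the linearity already proved. The heart of the matter is to pass from oB-data $(J,(p_n)_n,(\gamma_n)_n)$ for $f$ to oB-data for the two restricted functions. The key device is to replace each $\gamma_n$ by the measurable gauge
\begin{eqnarray*}
\widetilde{\gamma}_n(t):=\gamma_n(t)\wedge |t-a| \quad (t\ne a), \qquad \widetilde{\gamma}_n(a):=\gamma_n(a)\wedge \tfrac1n ,
\end{eqnarray*}
which is again measurable and positive off $a$. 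The first clause forces any $\widetilde{\gamma}_n$-fine interval whose tag $t$ differs from $a$ to avoid $a$, hence to lie on one side of $a$; this is exactly what is needed to tame free (Birkhoff) partitions, whose tags need not lie in their own subintervals. Moreover, I would refine any $\widetilde{\gamma}_n$-fine partition so that $a$ is an endpoint of the subintervals: splitting an interval at $a$ and keeping its tag preserves $\widetilde{\gamma}_n$-fineness (a subinterval stays in the same neighbourhood of the tag) and leaves $\sigma(f,\cdot)$ unchanged.

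After these reductions each subinterval lies in $[0,a]$ or in $[a,1]$, so the intervals tagged at points of $[0,a]$ split as a genuine $\gamma_n$-fine free partition $P_L$ of $[0,a]$ together with the intervals of $[a,1]$ that happen to be tagged at $a$; using $f1_{[0,a]}+f1_{[a,1]}=f+f1_{\{a\}}$ this yields
\begin{eqnarray*}
\sigma(f1_{[0,a]},P)+\sigma(f1_{[a,1]},P)=\sigma(f,P)+f(a)\!\!\sum_{t_i=a}\!\!|E_i| ,
\end{eqnarray*}
and the correction is controlled since all subintervals tagged at $a$ lie within $\widetilde{\gamma}_n(a)$ of $a$, giving $\left| f(a)\sum_{t_i=a}|E_i| \right|\le (2/n)\,|f(a)|$, an $(o)$-sequence. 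To see that $\sigma(f1_{[0,a]},\cdot)$ is $(o)$-Cauchy I would complete any two $\widetilde{\gamma}_n$-fine partitions of $[0,a]$ to partitions of $[0,1]$ by adjoining one and the same $\widetilde{\gamma}_n$-fine partition of $[a,1]$; the common tail cancels, so the estimate $|\sigma(f,P)-\sigma(f,P')|\le 2p_n$ for $f$ on $[0,1]$ transfers to the $[0,a]$-sums, and adding the controlled single-point term shows $f1_{[0,a]}$ (and symmetrically $f1_{[a,1]}$) is oB-integrable by the straightforward analogue of Theorem \ref{ordercauchy}. Passing to the limit along refinements in the displayed identity then delivers the additive formula.

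The step I expect to be the main obstacle is precisely this decoupling at the splitting point under \emph{free} partitions: because a tag may sit far from its interval, without the modification $\widetilde{\gamma}_n(t)=\gamma_n(t)\wedge|t-a|$ an interval could contribute to the ``wrong'' side of $a$ and the two restricted Riemann sums would not separate cleanly. Checking that this modification keeps the gauges measurable, that forcing a cut at $a$ does not destroy fineness, and that the residual single-point contribution at $a$ is absorbed into an $(o)$-sequence are the delicate points; the remainder is the standard linearity-and-Cauchy bookkeeping.
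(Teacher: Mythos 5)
The paper offers no proof of this Proposition at all: it is introduced with the sentence ``Standard techniques allow to prove usual properties of the (oB)-integral'' and left to the reader, so there is nothing to compare your argument against line by line. On its own merits your proposal is correct and supplies exactly the content the authors suppress. The linearity part is routine, as you say: $\gamma_n^f\wedge\gamma_n^g$ is again a measurable gauge and $|\alpha|p_n+|\beta|q_n$ is again an $(o)$-sequence. The substantive part is the splitting at $a$ under \emph{free} partitions, and your gauge modification $\widetilde{\gamma}_n(t)=\gamma_n(t)\wedge|t-a|$ (with $\widetilde{\gamma}_n(a)=\gamma_n(a)\wedge n^{-1}$) is the right device: it keeps measurability, it forces every subinterval with tag $t\ne a$ onto the same side of $a$ as $t$, and after cutting at $a$ the family $\{(E_i,t_i):E_i\subseteq[0,a]\}$ really is a genuine $\widetilde{\gamma}_n$-fine free partition of $[0,a]$ --- note that this uses both directions of your observation (an interval inside $[0,a]$ cannot be tagged at a point $t>a$, since then $E_i\subseteq(2t-a,2a-\ldots)\subseteq(a,1]$), which is worth saying explicitly. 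The identity $\sigma(f1_{[0,a]},P)+\sigma(f1_{[a,1]},P)=\sigma(f,P)+f(a)\sum_{t_i=a}|E_i|$ with the $(2/n)|f(a)|$ bound on the correction, the common-tail completion over $[a,1]$ (available by Cousin's lemma, since $\widetilde{\gamma}_n>0$ everywhere), and the Cauchy criterion of Theorem \ref{ordercauchy} (whose proof carries over verbatim to measurable gauges and free interval partitions) then give both the integrability of the two restrictions and, in the limit, the additive formula; the case of a general $[a,b]$ follows by splitting twice and using linearity, as you indicate. The only places I would ask you to write out in a final version are (i) the verification that $P_L$ covers all of $[0,a]$ after the refinement at $a$, and (ii) the remark that the limit element $J$ produced by the Cauchy argument does not require changing the gauges, so measurability is not lost.
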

\vskip.2cm

In  \cite[Example 2.1]{r2009b}, it is proven that, 
in general, H-integrable mappings are not Birkhoff (neither, a fortiori, Bochner) integrable: since the mapping given there takes values in an $M$-space (i.e. $l^{\infty}([0,1])$), this is also an example of an oM-integrable function that is not Birkhoff integrable. This 
 example will be used to discuss other implications. First, denote by $D$ the set of all  dyadic rational points in $[0,1]$ and enumerate the elements of $D$ as a sequence $(d_n)_{n=0}^{+\infty}$. 
Next, set $\varphi(d_n)=\frac{1}{2^n}$ for each $n$. Then define
the following $\sigma$-additive measure $\mu$ in the space $([0,1],\mathcal{P}([0,1]))$:
$$\mu(E):=\sum_{d_n\in E}\varphi(d_n),$$
for each $E\subset [0,1]$. Clearly $\mu$ is finite and the space $L:=L^1([0,1], \mathcal{P}([0,1]),\mu)$ is an $L$-space, with the norm
$$\|f\|_1=\sum_{d\in D}|f(d)|\varphi(d).$$
Observe that each function in $l^{\infty}([0,1]$ is also a member of $L$, since the series $\sum_n\varphi(d_n)$ is convergent, and the two spaces $L$ and $l^{\infty}$ have the same natural ordering.
Moreover, if $(p_n)_n$ is a decreasing sequence in $l^{\infty}$ with infimum $0$, it is clear that $\lim_n\|p_n\|_1=0$ (when $(p_n)_n$ is thought of as a sequence in $L$).\\

Now, recall the Phillips' example, i.e. 
the   function \mbox{$f:[0,1]\to l^{\infty}([0,1])$}  defined as follows:
 \[ f(t)(s)=\left\{\begin{array}{lr}
1, & {\rm if\ } |t-s|\ \ {\rm is\  a\ dyadic\ rational}, \\
0.& {\rm otherwise.} \end{array}\right.
\]
As earlier proved by Phillips in \cite[Example 10.2]{phillips}, $f$ is not Birkhoff integrable, but it is McShane integrable, as later shown by Rodr\'iguez in \cite{r2009b}.
Since $l^{\infty}([0,1])$ is an $M$-space, $f$ is also (oM)-integrable.

Now, since the range of $f$ is contained in $L$ (endowed with the same ordering as $l^{\infty}$), by the previous remarks it follows
that $f$ is oH-integrable in this space, and therefore Bochner integrable thanks to Theorem \ref{Lspazio}.
So, the same mapping $f$ is Bochner integrable with respect to the $\|\cdot \|_1$-norm but not even Birkhoff integrable with respect to $\|\cdot \|_{\infty}$.\\

The example in \cite[Example 2.8]{bms} shows that also for this type of integral generally a.e. equality is not sufficient for two functions to have the same integral. However, the proof of Proposition \ref{bdd} shows that this is actually
 the case for bounded functions. For this reason here 
only bounded integrable mappings $f:[0,1]\to X$ are considered (unless otherwise specified).\\

One interesting result in this topic is that, at least for bounded functions, (oB)-integrability can also be defined by means of {\em Perron}-type partitions, i.e. (oB)-integrability is equivalent to (oBH)-integrability. 
 This is not easy to prove: to give an idea,  a similar equivalence 
for the (oM)-integral as defined at the beginning of this section, i.e. without requiring measurability of the involved gauges, is still an open problem. \\
In order to achieve the purpose,  some more notions are necessary. The following definition is inspired at \cite[Definition 2(b)]{nara}.

\begin{definition}\label{oriemann}\rm
Let $f:[0,1]\to X$ be any mapping, and fix any measurable subset $E\in [0,1]$. The function $f$ will be said to be {\em (o)-Riemann measurable} in $E$ if there exist an $(o)$-sequence $(p_n)_n$ in $X$ and a corresponding sequence $(F_n)_n$ of closed subsets of $E$, such that
$\lambda(E\setminus F_n)\leq n^{-1}$
and
\begin{eqnarray}\label{normadentro}
\sum_{i=1}^k|f(t_i)-f(t'_i)|\lambda(E_i)\leq p_n\end{eqnarray}
hold true, for every $n\in \enne$ and every finite collection of pairs $\{(t_i,E_i)_{i=1}^k\}$, where the sets $E_i$ are pairwise non-overlapping subintervals,
 with  the tags $t_i$ belong to $E_i\cap F_n$ for all $i$ and  \mbox{$\max\{\lambda(E_i)\}\leq n^{-1}$}.
\end{definition}

Remark that, in this definition, the formula (\ref{normadentro}) is (formally) quite stronger than the corresponding one, introduced in
 \cite[Definition 2(b)]{nara}: the norm in the latter is {\em outside} the summation, while here the modulus is  {\em inside}.\\

It is easy to see that, if $f$ is (o)-Riemann measurable on a set $E$, then also $\alpha f$ is, for every real constant $\alpha$. Moreover, if $f$ and $g$ are (o)-Riemann measurable on $E$, the same holds also for $f+g$. Other useful facts are collected in the following Proposition, whose proof is similar to that of \cite[Theorem 1]{nara}.

\begin{proposition}\label{mis}
Let $f:[0,1]\to X$ be any mapping, and fix any measurable set $E$ in $[0,1]$. The following properties hold:
\begin{description}
\item[(\ref{mis}.a)] If $f$ is {\rm (o)}-Riemann measurable on $E$, then it is on every measurable subset $E_1\subset E$.
\item[(\ref{mis}.b)] $f$ is {\rm (o)}-Riemann measurable on $E$ if and only if $f1_E$ is {\rm (o)}-Riemann measurable on $[0,1]$.
\end{description}
\end{proposition}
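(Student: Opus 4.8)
The plan is to deduce both statements from the regularity of $\lambda$ together with the fact that the oscillation estimate (\ref{normadentro}) is inherited when one shrinks the tag-sets, leaving the genuine work for the ``only if'' part of (\ref{mis}.b), where the jump of $f1_E$ across $\partial E$ has to be tamed.

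For (\ref{mis}.a) I would start from the data $(p_n)_n,(F_n)_n$ witnessing (o)-Riemann measurability of $f$ on $E$ and, for a measurable $E_1\subset E$, use inner regularity of $\lambda$ to pick closed sets $\tilde F_n\subset E_1\cap F_n$ with $\lambda((E_1\cap F_n)\setminus\tilde F_n)$ as small as desired. Since $\tilde F_n\subset F_n$, any admissible collection for $\tilde F_n$ at mesh $\leq n^{-1}$ is admissible for $F_n$ at the same level, so (\ref{normadentro}) transfers verbatim with the unchanged $(o)$-sequence $(p_n)_n$; crucially the mesh threshold and the tag-set index stay aligned. Because
$$\lambda(E_1\setminus\tilde F_n)\leq\lambda(E_1\setminus F_n)+\lambda((E_1\cap F_n)\setminus\tilde F_n)\leq 2n^{-1}\to 0,$$
the closed sets $\tilde F_n$ exhaust $E_1$ in measure, which is what Definition \ref{oriemann} asks for (up to a harmless rescaling of the reference null sequence $n^{-1}$). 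The ``if'' half of (\ref{mis}.b) will then be immediate: applying (\ref{mis}.a) to the subset $E\subset[0,1]$ and noting that the admissible tags lie in $E$, where $f1_E=f$, shows at once that $f$ is (o)-Riemann measurable on $E$.

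The ``only if'' half is where the work is. Assuming $f$ is (o)-Riemann measurable on $E$, with data $(p_n)_n,(F_n)_n$, and using that $f$ is bounded so that $|f|\leq u$ for some $u\in X^{+}$, I would enlarge each $F_n$ by a closed set $Z_n\subset E^{c}:=[0,1]\setminus E$ with $\lambda(E^{c}\setminus Z_n)$ small, and set $G_n:=F_n\cup Z_n$, a closed set with $\lambda([0,1]\setminus G_n)\to 0$. Given an admissible collection for $f1_E$ with tags in $G_n$, I would split it according to whether both tags lie in $F_n$, both in $Z_n$, or one in each: the first group is controlled by (\ref{normadentro}) because $f1_E=f$ on $E\supset F_n$, the second contributes $0$ because $f1_E$ vanishes on $Z_n$, and everything reduces to the mixed group.

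The mixed, cross-boundary group will be the main obstacle. Each such interval $E_i$ carries a tag in $F_n\subset E$ and a tag in $Z_n\subset E^{c}$, so it lies inside
$$N_n:=\{x:\,\mathrm{dist}(x,F_n)\leq n^{-1}\}\cap\{x:\,\mathrm{dist}(x,Z_n)\leq n^{-1}\},$$
and, the $E_i$ being non-overlapping, the mixed contribution is at most $u\,\lambda(N_n)$. The idea is to force $\lambda(N_n)$ down to $0$ by choosing $F_n$ and $Z_n$ to consist of Lebesgue density points of $E$ and of $E^{c}$ respectively: shrinking the tag-sets costs nothing in (\ref{normadentro}), and Egorov's theorem applied to the two density functions yields a scale $\rho_n$ below which the densities are uniformly near $1$ on the respective sets. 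A short comparison of the $E$-measure of two overlapping balls, one centred at a point of $F_n$ and one at a nearby point of $Z_n$, then forces $N_n=\varnothing$ as soon as the mesh $n^{-1}$ is small relative to $\rho_n$. After a final reindexing enforcing $n^{-1}<\rho_n$, the mixed group disappears, and (\ref{normadentro}) for $f1_E$ holds with the $(o)$-sequence carried over from $(p_n)_n$. I expect this density/separation estimate, and not the bookkeeping around the null sequences, to be the crux, with boundedness of $f$ entering only to reduce the stray intervals to the harmless term $u\,\lambda(N_n)$.
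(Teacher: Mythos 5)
Your treatment of (\ref{mis}.a) and of the ``if'' half of (\ref{mis}.b) is essentially the standard argument and is fine (shrinking the tag-sets only weakens requirement (\ref{normadentro}); inner regularity supplies the closed sets). For the ``only if'' half, note that the paper gives no proof at all---it defers to \cite[Theorem 1]{nara}---and the argument there is much lighter than yours: once you have the two disjoint closed sets $F_n\subset E$ and $Z_n\subset[0,1]\setminus E$, they are disjoint \emph{compact} subsets of $[0,1]$ and hence at positive distance $d_n$, so no interval of length less than $d_n$ can carry one tag in each set and the mixed group is simply empty. The whole Lebesgue-density/Egorov apparatus, and with it the boundedness of $f$ (which the Proposition does not assume), is unnecessary: it is only a roundabout way of producing a positive separation scale that you already have for free.

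The genuine gap is the step you wave off as ``a final reindexing enforcing $n^{-1}<\rho_n$''. That reindexing is circular: in Definition \ref{oriemann} the single index $n$ controls both the measure defect ($\lambda(E\setminus F_n)\leq n^{-1}$) and the mesh ($\max_i\lambda(E_i)\leq n^{-1}$). Increasing the index shrinks the admissible mesh, but it also forces $F_n$ and $Z_n$ to fill up $E$ and $E^c$ in measure, which drives your Egorov scale $\rho_n$ (and equally the distance $d_n$) down; nothing guarantees that $\rho_n$ decays more slowly than $n^{-1}$, and in general it does not. The failure is not cosmetic: for a fat Cantor set $E$ whose construction converges slowly enough and $f\equiv 1$, one can check that for every closed $H$ with $\lambda([0,1]\setminus H)\leq n^{-1}$ there are non-overlapping intervals of length at most $n^{-1}$ (the $k$-th stage construction intervals with $2^{-k}\approx n^{-1}$), of total length bounded below, each meeting both $H\cap E$ and $H\cap E^c$; so $f1_E$ violates the literal Definition \ref{oriemann} even though $f$ is trivially {\rm (o)}-Riemann measurable on $E$. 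The statement is therefore only true---and your argument only closes---if the mesh threshold is decoupled from $n$, i.e.\ if (\ref{normadentro}) is required only for $\max_i\lambda(E_i)\leq\delta_n$ with some $\delta_n>0$, as in \cite[Definition 2(b)]{nara}; that this is the intended reading is confirmed by the paper's own proof of Theorem \ref{primonara}, which verifies exactly that version. Under that reading you may simply take $\delta_n\leq\min(d_n,\rho_n)$ and no reindexing is needed; under the literal reading no reindexing can save the argument.
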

Moreover
\begin{theorem}\label{primonara}
Let $f:[0,1]\to X$ be any {\rm (oBH)}-integrable mapping on $[0,1]$. Then $f$ is {\rm (o)}-Riemann measurable.
\end{theorem}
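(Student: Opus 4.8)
The plan is to extract from the {\rm (oBH)}-integrability a Saks--Henstock type estimate on partial interval-partitions, and then to convert the measurable gauges into closed sets on which they are bounded below, so that the purely metric requirement of Definition \ref{oriemann} (mesh $\le n^{-1}$, tags in $F_n$) becomes strong enough to force $\gamma$-fineness. First I would record the data of {\rm (oBH)}-integrability: there are an $(o)$-sequence $(q_n)_n$, an element $J$, and measurable gauges $(\gamma_n)_n$ such that $|\sigma(f,P)-J|\le q_n$ for every $\gamma_n$-fine Perron partition $P$ of $[0,1]$ into subintervals; by the triangle inequality for the modulus, $|\sigma(f,P)-\sigma(f,P')|\le 2q_n$ for any two such partitions, which is the Cauchy criterion in the spirit of Theorem \ref{ordercauchy}.

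Running the argument of Proposition \ref{henlemma} and Corollary \ref{henstoc2}, adapted so that all cells are subintervals and the gauges are the measurable $\gamma_n$ (nothing in that argument uses more than Cousin's Lemma, which is valid for arbitrary gauges), I would obtain the partial \emph{sum-of-moduli} estimate: for every $n$ and every finite $\gamma_n$-fine Perron family $\{(E_i,t_i)\}_{i=1}^k$ of non-overlapping subintervals, and for any alternative $\gamma_n$-fine tags $t'_i\in E_i$,
$$\sum_{i=1}^k|f(t_i)-f(t'_i)|\lambda(E_i)\le 2q_n.$$
(One extends the family to a full $\gamma_n$-fine partition of $[0,1]$ with \emph{identical} tags on the complementary cells, whose contributions to the sum of moduli vanish, and then applies the estimate of Corollary \ref{henstoc2}.2 adapted to intervals.)

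Next I would turn the measurable gauge $\gamma_m$ into a closed set. Since each $\gamma_m$ is strictly positive, $\{\gamma_m\le 1/n\}\downarrow\emptyset$ as $n\to\infty$, so $\lambda(\{\gamma_m\le 1/n\})\to 0$, and inner regularity of $\lambda$ yields closed subsets of $\{\gamma_m>1/n\}$ with small complement. The bookkeeping step couples the two indices: for each $m$ let $N(m)$ be minimal with $\lambda(\{\gamma_m\le 1/n\})<\tfrac{1}{2n}$ for all $n\ge N(m)$, take $N(\cdot)$ nondecreasing, and set $m(n):=\max\{m:N(m)\le n\}$, so that $m(n)\to\infty$ while $\lambda(\{\gamma_{m(n)}\le 1/n\})<\tfrac{1}{2n}$. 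Choosing a closed $F_n\subset\{\gamma_{m(n)}>1/n\}$ with $\lambda(\{\gamma_{m(n)}>1/n\}\setminus F_n)<\tfrac{1}{2n}$ gives $\lambda([0,1]\setminus F_n)\le 1/n$, and on $F_n$ one has $\gamma_{m(n)}>1/n$. Setting $p_n:=2q_{m(n)}$ keeps an $(o)$-sequence, since $m(n)\to\infty$ and $(q_m)_m$ is an $(o)$-sequence.

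Finally I would verify (\ref{normadentro}). If $\{(E_i,t_i)\}$ are non-overlapping subintervals with $\max_i\lambda(E_i)\le 1/n$ and tags $t_i,t'_i\in E_i\cap F_n$, then for $w\in E_i$ one has $|w-t_i|\le\lambda(E_i)\le 1/n<\gamma_{m(n)}(t_i)$ since $t_i\in F_n$, so the family is $\gamma_{m(n)}$-fine for the $t_i$ and, likewise, for the $t'_i$; the partial estimate at index $m(n)$ then gives $\sum_i|f(t_i)-f(t'_i)|\lambda(E_i)\le 2q_{m(n)}=p_n$, which is exactly the required inequality, so $f$ is {\rm (o)}-Riemann measurable. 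The main obstacle I anticipate is the second step, namely making the order Saks--Henstock/Henstock-lemma argument go through with \emph{intervals} and \emph{measurable} gauges rather than arbitrary measurable sets and arbitrary gauges, together with the diagonal coupling $m\mapsto m(n)$ that lets the single mesh scale $1/n$ simultaneously enforce $\gamma_{m(n)}$-fineness and drive $p_n\downarrow 0$ in the order sense.
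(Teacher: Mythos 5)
Your overall strategy coincides with the paper's: first the order Saks--Henstock estimate (the interval version of Corollary \ref{henstoc2}.2, extended to partial families by padding with identically tagged complementary cells), then a Lusin-type use of the measurability of the gauges to produce closed sets $F_n$ of almost full measure on which the gauge is bounded below, so that small mesh together with tags in $F_n$ forces $\gamma$-fineness. Both of these ingredients are sound and are exactly what the paper does.

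The step that fails is the bookkeeping coupling. You define $N(m)$ as the minimal integer such that $\lambda(\{\gamma_m\le 1/n\})<\tfrac{1}{2n}$ for all $n\ge N(m)$; such an integer need not exist. Continuity from above only gives $\lambda(\{\gamma_m\le 1/n\})\to 0$, with no rate: for the admissible measurable gauge $\gamma_m(t)=t$ for $t>0$ (and, say, $\gamma_m(0)=1$) one has $\lambda(\{\gamma_m\le 1/n\})=1/n$ for every $n$, which is never below $\tfrac{1}{2n}$, so $N(m)$ is undefined and the map $n\mapsto m(n)$ collapses. The difficulty you are trying to solve arises only because Definition \ref{oriemann} as printed ties the mesh bound to the same $n^{-1}$ that controls the measure of the exceptional set. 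The paper instead keeps a separate mesh threshold $\delta_n>0$, chosen so that $\{t:\gamma_n(t)<\delta_n\}$ is contained in an open set $G_n$ of measure $<n^{-1}$, and concludes the estimate for families of mesh $\le\delta_n$ with tags in $F_n=[0,1]\setminus G_n$; this is the form of Naralenkov's original definition and the form actually exploited later in the proof of Theorem \ref{naradue}. So either you adopt that reading, in which case your coupling is unnecessary and the argument closes immediately, or, if the literal $n^{-1}$ mesh bound is insisted upon, you need a different device, since no choice of $m(n)$ can guarantee $\lambda(\{\gamma_{m(n)}\le 1/n\})<\tfrac{1}{2n}$ in general.
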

\begin{proof}  
Thanks to the Henstock Lemma, in particular \ref{henstoc2}.2), it follows that, also for this type of integral, there exist an $(o)$-sequence  
 $(p_n)_n$ and a corresponding sequence $(\gamma_n)_n$ of measurable gauges such that
$$\sum_{i=1}^k|f(t_i)\lambda(E_i)-f(t_i')\lambda(E_i)|\leq p_n,$$
as soon as all the pairs $(t_i,E_i)_{i=1}^k$ form a $\gamma_n$-fine Henstock-type partition. (The fact that here the sets $E_i$ are subintervals is irrelevant).
Now, fix $n$. Since $\gamma_n$ is measurable, there exist a positive $\delta_n>0$ and an open set $G_n\subset [0,1]$ with $\lambda(G_n)<n^{-1}$ such that $\{t\in [0,1]: \gamma_n(t)<\delta_n\}\subset G_n$. Setting $F_n=[0,1]\setminus G_n$,  then $F_n$ is closed and, by the previous inequality,
$$\sum_{i=1}^k|f(t_i)-f(t'_i)|\lambda(E_i)\leq p_n$$
holds true, as soon as the intervals $E_i$ are non-overlapping and satisfy $\max_i\lambda(E_i)\leq \delta_n$, and the points $t_i,t'_i$ are in $E_i\cap F_n$.
\end{proof}

Now, it will be proven that, as soon as $f$ is bounded and (o)-Riemann measurable, then it is (oB)-integrable. Of course, thanks to Theorem \ref{primonara}, this will imply that (oBH)-integrability for bounded functions is equivalent to (oB)-integrability.
Proceeding as in \cite{nara},  a  technical Lemma concerning the  {\em oscillation} $\omega$ of a bounded function $f$ will be stated. Its  proof is omitted since it is completely similar to that of \cite[Lemma 1]{nara}.
\begin{lemma}\label{gordon1}
Let $f:[0,1]\to X$ be any bounded mapping, and fix any measurable set $F\subset [0,1]$. Assume that $a\in X$ and $\delta\in \erre$ are two positive elements such that
$$\sum_{i=1}^N(\beta_i-\alpha_i)\omega(f;[\alpha_i,\beta_i]\cap F)\leq \frac{a}{5}$$
as soon as $([\alpha_i,\beta_i])_{i=1}^N$ are pairwise disjoint subintervals of $[0,1]$ with $\max_i (\beta_i-\alpha_i)\leq 10\delta$.
Then 
$$\sum_{k=1}^K(\beta_k-\alpha_k)\omega(f;[\alpha_k-\delta,\beta_k+\delta]\cap F)\leq a$$
as soon as $([\alpha_k,\beta_k])_{k=1}^K$ are pairwise disjoint subintervals of $[0,1]$ with $\max_k (\beta_k-\alpha_k)\leq 2\delta$.
\end{lemma}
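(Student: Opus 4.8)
The plan is to reduce the conclusion, which involves the \emph{fattened} intervals $J_k:=[\alpha_k-\delta,\beta_k+\delta]$, to the hypothesis, which only controls oscillation sums over \emph{disjoint} intervals of length at most $10\delta$. Throughout I use that, since $f$ is bounded, the order-oscillation $\omega(f;S)=\sup\{|f(x)-f(y)|:x,y\in S\}$ exists in $X$ and is monotone in $S$, so that $\omega(f;J_k\cap F)\le\omega(f;P\cap F)$ whenever $J_k\subseteq P$. The whole difficulty is that each $J_k$ has length at most $4\delta$ but the $J_k$ may overlap unboundedly: arbitrarily many of the disjoint intervals $[\alpha_k,\beta_k]$, being possibly very short, can crowd into a window of length $4\delta$, so the fattened intervals cannot be colored directly.

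First I would merge the given intervals into \emph{runs}. Relabel the $[\alpha_k,\beta_k]$ so that $\alpha_1\le\alpha_2\le\cdots$ and, scanning left to right, open a run at the current interval with left endpoint $\ell$, appending further intervals as long as their right endpoints stay $\le\ell+8\delta$; close the run and start a new one at the first interval whose right endpoint exceeds $\ell+8\delta$. For a run $g$ let $\ell_g$ and $r_g$ be its leftmost left endpoint and rightmost right endpoint, and set $P_g:=[\ell_g-\delta,r_g+\delta]$. By construction $r_g-\ell_g\le8\delta$, hence $|P_g|\le10\delta$, and every $J_k$ with $k$ in run $g$ satisfies $J_k\subseteq P_g$.

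Next I would bound the pointwise multiplicity of the parents $P_g$. Since each interval has length $\le2\delta$, the first interval of run $g+1$ has right endpoint $>\ell_g+8\delta$, hence left endpoint $\ell_{g+1}>\ell_g+6\delta$; thus the $\ell_g$ are spaced strictly more than $6\delta$ apart. As $P_g\subseteq[\ell_g-\delta,\ell_g+9\delta]$, a point $x$ lies in $P_g$ only if $\ell_g\in[x-9\delta,x+\delta]$, a window of length $10\delta$, which can contain at most two of the $\ell_g$. So every point of $[0,1]$ meets at most two parents, and since a finite family of intervals can be colored, with as many colors as its maximal pointwise overlap, so that equally colored intervals are pairwise disjoint, the $P_g$ split into two subfamilies, each consisting of disjoint intervals of length $\le10\delta$ (truncating to $[0,1]$ where needed, which only decreases lengths and oscillations).

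Finally I would assemble the estimate. Grouping the target sum by runs and using $J_k\subseteq P_g$ together with $\sum_{k\in g}(\beta_k-\alpha_k)\le|P_g|$ (the $[\alpha_k,\beta_k]$ in a run are disjoint and contained in $P_g$) gives
\[
\sum_{k=1}^K(\beta_k-\alpha_k)\,\omega(f;J_k\cap F)\ \le\ \sum_g|P_g|\,\omega(f;P_g\cap F).
\]
Splitting the parents into the two disjoint subfamilies and applying the hypothesis to each bounds the contribution of each subfamily by $a/5$, so the right-hand side is $\le 2a/5\le a$, which is the claim. I expect the genuine obstacle to be exactly the combinatorial bookkeeping of the middle two steps, namely choosing the run threshold ($8\delta$) so that the parents simultaneously have length $\le10\delta$ and bounded pointwise overlap, rather than the routine order-theoretic monotonicity of $\omega$; the generous factor $5$ in the hypothesis is precisely what leaves room for this grouping.
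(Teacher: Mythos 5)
Your argument is correct. Note that the paper does not actually prove this lemma: it declares the proof ``omitted since it is completely similar to that of [Lemma 1, Naralenkov]'', so there is no in-paper argument to match; what you have supplied is a genuine self-contained proof. Your route rests on the same three pillars as the classical Gordon--Naralenkov argument (monotonicity of the order-oscillation $\omega(f;\cdot)$, containment of each fattened interval $J_k=[\alpha_k-\delta,\beta_k+\delta]$ in a parent of length at most $10\delta$, and a split of the parents into boundedly many pairwise disjoint subfamilies to which the hypothesis applies), but the combinatorics differ: the standard proof clusters the $[\alpha_k,\beta_k]$ by a fixed arithmetic grid of mesh comparable to $\delta$ and may need up to five disjoint subfamilies --- which is where the constant $a/5$ comes from --- whereas your adaptive left-to-right run decomposition (threshold $8\delta$, hence $\ell_{g+1}>\ell_g+6\delta$ and pointwise overlap at most $2$; indeed $P_g\cap P_{g+2}=\emptyset$, so the even/odd split suffices without invoking general interval-graph coloring) only spends $2a/5$ of the available budget. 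All the quantitative checks ($r_g-\ell_g\le 8\delta$, $|P_g|\le 10\delta$, $J_k\subseteq P_g$, $2a/5\le a$ since $a\ge 0$ in the lattice) are sound. One line to tighten: in your final display the weight should be the truncated length $|P_g\cap[0,1]|$ rather than $|P_g|$, since the hypothesis only covers subintervals of $[0,1]$ and $P_g$ may protrude past the endpoints; this is harmless because the $[\alpha_k,\beta_k]$ of run $g$ are disjoint subintervals of $[\ell_g,r_g]\subseteq P_g\cap[0,1]$, so $\sum_{k\in g}(\beta_k-\alpha_k)\le |P_g\cap [0,1]|$ and the chain of inequalities goes through verbatim with the truncated parents.
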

So it follows:
\begin{theorem}\label{naradue}
Let $f:[0,1]\to X$ be a bounded and {\rm (o)}-Riemann measurable map. Then $f$ is {\rm (oB)}-integrable.
\end{theorem}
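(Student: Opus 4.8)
The plan is to verify a Cauchy-type criterion for (oB)-integrability phrased with \emph{measurable} gauges, exactly in the spirit of Theorem \ref{ordercauchy}: it suffices to produce, for each $n$, a measurable gauge $\gamma_n$ and an $(o)$-sequence $(q_n)_n$ such that $|\sigma(f,P)-\sigma(f,P')|\leq q_n$ whenever $P,P'$ are $\gamma_n$-fine free partitions of $[0,1]$. Indeed, since $X$ has order-continuous norm (hence is Dedekind complete and $\|q_n\|\to 0$), choosing one $\gamma_n$-fine partition for each $n$ along a decreasing sequence of gauges produces norm-Cauchy Riemann sums whose limit $J$ furnishes the desired oB-integral, the inequality $|\sigma(f,P)-J|\leq q_n$ passing to the limit because the positive cone is norm-closed.

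First I would upgrade the defining inequality of (o)-Riemann measurability to an oscillation estimate. Writing $\omega(f;A)=\sup\{|f(s)-f(t)|:s,t\in A\}$, which is well defined and dominated by $2M$ (where $|f|\leq M$, using Dedekind completeness of $X$), the lattice identity $\sup(A+B)=\sup A+\sup B$ converts the bound $\sum_i|f(t_i)-f(t'_i)|\lambda(E_i)\leq p_n$ of Definition \ref{oriemann} into
\[
\sum_i \lambda(E_i)\,\omega\big(f;[\alpha_i,\beta_i]\cap F_n\big)\leq p_n
\]
for all non-overlapping subintervals $E_i=[\alpha_i,\beta_i]$ of length $\leq 1/n$ meeting $F_n$, where $(p_n)_n$ and the closed sets $(F_n)_n$ come from Definition \ref{oriemann}. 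Setting $\delta_n:=1/(10n)$, this is precisely the hypothesis of Lemma \ref{gordon1} with $a=5p_n$, whose conclusion yields the \emph{inflated} oscillation bound
\[
\sum_k \lambda(K_k)\,\omega\big(f;[\alpha_k-\delta_n,\beta_k+\delta_n]\cap F_n\big)\leq 5p_n
\]
for non-overlapping subintervals $K_k=[\alpha_k,\beta_k]$ of length $\leq 2\delta_n$.

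Next I would construct the gauge. Put $G_n:=[0,1]\setminus F_n$, which is relatively open with $\lambda(G_n)\leq 1/n$, and define the Borel gauge $\gamma_n(t)=\delta_n$ for $t\in F_n$ and $\gamma_n(t)=\min\{\delta_n,\tfrac12\,\mathrm{dist}(t,F_n)\}$ for $t\in G_n$. This design forces two features: if a tag $t$ lies in $G_n$ then its interval is swallowed by $G_n$, so all such intervals together have length $\leq\lambda(G_n)\leq 1/n$; while if $t\in F_n$ then its interval lies within $\delta_n$ of $t$. Given two $\gamma_n$-fine free partitions $P=(t_i,E_i)_i$ and $P'=(t'_j,E'_j)_j$, I pass to the common refinement into pieces $K=E_i\cap E'_j$ and estimate
\[
|\sigma(f,P)-\sigma(f,P')|\leq \sum_K |f(t(K))-f(t'(K))|\,\lambda(K),
\]
splitting the pieces according to whether one of the associated tags lies in $G_n$ or both lie in $F_n$. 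On the first family the confinement property bounds the total length by $2/n$, so the crude order bound $|f(t(K))-f(t'(K))|\leq 2M$ contributes at most $4M/n$, which is an $(o)$-sequence since $f$ is order-bounded; on the second family both tags sit within $\delta_n$ of $K$ and in $F_n$, whence $|f(t(K))-f(t'(K))|\leq \omega(f;[\alpha_K-\delta_n,\beta_K+\delta_n]\cap F_n)$ and the inflated estimate bounds this family by $5p_n$. Thus $q_n:=5p_n+4M/n$ is an $(o)$-sequence with $|\sigma(f,P)-\sigma(f,P')|\leq q_n$, and the Cauchy criterion concludes.

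The main obstacle is the \emph{free} character of the partitions: the tags of $P$ and $P'$ need not lie in the refinement pieces they weight, so the in-interval hypothesis of Definition \ref{oriemann} cannot be invoked directly. This gap is exactly what the inflation Lemma \ref{gordon1} absorbs, by enlarging each interval by $\delta_n$; complementarily, the two-regime gauge relegates the uncontrolled tags, those lying outside $F_n$, to a set of arbitrarily small measure on which only the crude bound $2M$ is required. A secondary technical point, peculiar to the order setting, is the passage from tag-differences to oscillations, which rests on Dedekind completeness of $X$ and the additivity of suprema.
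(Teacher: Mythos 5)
Your proposal is correct and follows essentially the same route as the paper: a Cauchy criterion verified via a two-regime measurable gauge (constant on $F_n$, distance-to-$F_n$ off it), the common refinement split according to whether both tags lie in $F_n$, the crude $2M$ bound on the small-measure exceptional part, and Lemma \ref{gordon1} to absorb the $\delta_n$-inflation forced by free tags on the main part. Your write-up is in fact slightly more explicit than the paper's about the passage from tag-differences to oscillations and about the exact constants, but the argument is the same.
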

\begin{proof}
First, denote by $M$ the supremum of $|f(x)|$, as $x$ runs in $[0,1]$. Next, let $(p_n)_n$ and $(F_n)_n$ denote the $(o)$-sequence and the corresponding sequence of measurable sets in $[0,1]$ related to the (o)-Riemann measurability of $f$. Now, define a new sequence $(\gamma'_n)_n$ of measurable gauges as follows:
$$\gamma'_n(t)=\left\{\begin{array}{ll}
\gamma_n(t)/20,&  t\in F_n\\
dist(t,F),&  t\notin F_n\end{array}\right.$$
Next, fix $n$ and choose two free partitions of $[a,b]$, say $P$ and $P'$, respectively with pairs $(E_i,t_i), i=1...k,$ and $(E'_j,t'_j), j=1,...k'$, subordinated to $\gamma'_n$. It holds
\begin{eqnarray*}
|\sigma(f,P)-\sigma(f,P')|&=&|\sum_{i,j}(f(t_i)-f(t'_j))\lambda(E_i\cap E'_j)|\leq \\ 
&\leq&
 |\sum_{(i,j)\in S}(f(t_i)-f(t'_j))\lambda(E_i\cap E'_j)|+
\\ &+&\sum_{(i,j)\notin S}|f(t_i)-f(t'_j)|\lambda(E_i\cap E'_j),
\end{eqnarray*}
where $S$ denotes the set of all couples $(i,j)$ such that $t_i$ and $t'_j$ belong to $F_n$.
By definition of the sets $F_n$, one has $\lambda(F_n^c)\leq n^{-1}$, and so, also thanks to the definition of $\gamma'_n$:
$$\sum_{(i,j)\notin S}|f(t_i)-f(t'_j)|\lambda(E_i\cap E'_j)\leq 2M n^{-1};$$
moreover, as to the first summand, one has clearly
$$|\sum_{(i,j)\in S}(f(t_i)-f(t'_j))\lambda(E_i\cap E'_j)|\leq \sum_{(i,j)\in S}|f(t_i)-f(t'_j)|\lambda(E_i\cap E'_j).$$
Now, from the definition of $\gamma'_n$, it follows that 
$$\max_{(i,j)\in S}\lambda(E_i\cap E'_j)\leq  5^{-1}\gamma_n :$$
 so, thanks to the Lemma \ref{gordon1} and (o)-Riemann measurability,
$$\sum_{(i,j)\in S}|f(t_i)-f(t'_j)|\lambda(E_i\cap E'_j)\leq 5p_n.$$
Now, from the previous inequalities, one gets
$$|\sigma(f,P)-\sigma(f,P')|\leq 5p_n+2M n^{-1}.$$
Since $(5p_n+2M n^{-1})_n$ is clearly an $(o)$-sequence, the Cauchy condition for the $(oB)$-integrability is satisfied; 
and, since $X$ is complete, $f$ turns out to be $(oB)$-integrable.
\end{proof}
\vskip.5cm
\noindent {\bf \large Conclusions}\\
In this paper the notions of Henstock (Mc Shane) integrability for
 functions defined in a metric compact regular space
 and taking values in a Banach lattice with an order-continuous norm are
 investigated. Both the norm-type and the order-type integrals have been
 examined. In general the
 order-type integral is stronger than the norm-one, while in $M$-spaces
 the two notions coincide and in $L$-spaces the order-type Henstock
 integral implies the Bochner one.\\

\noindent {\bf \large Acknowledgement} \\
The authors have been supported by University of Perugia -- Department of Mathematics and Computer Sciences-- Grant Nr 2010.011.0403,\\ 
Prin "Metodi logici per il trattamento dell'informazione",   Prin "Descartes" and by the Grant prot. U2014/000237 of GNAMPA - INDAM (Italy).

\end{document}